\newtheorem{thm}{Theorem}[section]
\newtheorem{lemma}[thm]{Lemma}
\def \cD {{\cal D}}
\def \cF {{\cal F}}
\def \cG {{\cal G}}
\def \cP {{\cal P}}
\begin{document}
\title{Resolvable $G$-designs of order $v$ and index $\lambda$}

\author {Mario Gionfriddo
\thanks{Supported  by PRIN and I.N.D.A.M (G.N.S.A.G.A.), Italy}\\
\small  Dipartimento di Matematica e Informatica \\
\small  Universit\`a di Catania \\
\small Catania\\
\small Italia\\
{\small \tt gionfriddo@dmi.unict.it}\\
Giovanni Lo Faro
\thanks{Supported  by PRIN and I.N.D.A.M (G.N.S.A.G.A.), Italy}\\
\small  Dipartimento di Matematica e Informatica \\
\small  Universit\`a di Messina \\
\small  Messina\\
\small Italia\\
{\small \tt lofaro@unime.it}\\
Salvatore Milici
\thanks{Supported by MIUR and by C. N. R. (G. N. S. A. G. A.), Italy}\\
\small Dipartimento di Matematica e Informatica \\
\small Universit\`a di Catania \\
\small Catania\\
\small Italia\\
{\small \tt milici@dmi.unict.it}  \\
Antoinette Tripodi
\thanks{Supported  by PRIN and I.N.D.A.M (G.N.S.A.G.A.), Italy}\\
\small  Dipartimento di Matematica e Informatica \\
\small  Universit\`a di Messina \\
\small  Messina\\
\small Italia\\
{\small \tt atripodi@unime.it}}

\date{ }
\maketitle

\begin{abstract}
In this paper we consider the problem concerning the existence of a resolvable $G$-design of order $v$ and index $\lambda$.
We solve the problem for the cases in which $G$ is a connected subgraph of $K_4$.
\end{abstract}

\vspace{5 mm}
\noindent AMS classification: $05B05$.\\
Keywords: Resolvable G-design; $C_4$; $K_{1,3}$;  $K_{3}+e$; $K_4-e$. 

\section{Introduction and definitions}\label{introduzione}

Let $v$ and $\lambda$ be positive integers , $\lambda K_{v}$ be the complete multigraph of order $v$ and index $\lambda$
and $G$ be a subgraph of $K_v$. A {\em $G$-design\/} of order $v$ and index $\lambda$
(denoted by ($\lambda K_{v},G)$-design),  is a decomposition of the edge set of $\lambda K_{v}$  into
subgraphs (called {\em blocks}) isomorphic to $G$. A ($\lambda K_{v},G$)-design  is
said to be {\em resolvable\/} if it is possible to partition the blocks
into  classes $\cP_i$ (often referred to as {\em parallel classes\/})
such that every vertex of $\lambda K_{v}$ appears in exactly one block of each $\cP_i$.
By simple calculation, we can obtain the following result.

\begin{lemma}
\label{lemmaP} If there exists a resolvable $(\lambda K_{v},G)$-design,
 then  $v\equiv\!  0\!\! \pmod{\mid V(G)\mid}$, $\lambda v(v-1) \equiv 0 \pmod{2\mid E(G)\mid}$, and
 $\lambda (v-1)\mid V(G)\mid \equiv 0 \pmod{2\mid E(G)\mid}$.
\end{lemma}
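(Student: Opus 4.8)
The plan is to obtain all three congruences as integrality (divisibility) constraints forced by counting the various objects of the design, using only the observation that every such count must be a nonnegative integer.

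First I would establish $v\equiv 0\pmod{|V(G)|}$ by examining a single parallel class. Fix any $\cP_i$. By the definition of resolvability every vertex of $\lambda K_v$ lies in exactly one block of $\cP_i$, and each block, being isomorphic to $G$, has precisely $|V(G)|$ vertices. Hence the vertex sets of the blocks of $\cP_i$ form a partition of the $v$-element vertex set into parts of equal size $|V(G)|$, so $|V(G)|$ divides $v$.

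Next I would count edges to obtain $\lambda v(v-1)\equiv 0\pmod{2|E(G)|}$. The multigraph $\lambda K_v$ has $\lambda\binom{v}{2}=\lambda v(v-1)/2$ edges (counted with multiplicity), and since the blocks decompose its edge set and each block carries exactly $|E(G)|$ edges, the total number of blocks must be $b=\lambda v(v-1)/(2|E(G)|)$. As $b$ is an integer, $2|E(G)|$ divides $\lambda v(v-1)$. For the third congruence I would count blocks per class: by the first step each parallel class contains exactly $v/|V(G)|$ blocks, so writing $r$ for the number of parallel classes we have $b=r\,v/|V(G)|$. Substituting the value of $b$ and solving for $r$ gives $r=\lambda(v-1)|V(G)|/(2|E(G)|)$, and integrality of $r$ yields $2|E(G)|\mid\lambda(v-1)|V(G)|$.

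There is no substantive obstacle here: all three statements reduce to elementary counting identities together with the fact that the vertex, edge, block, and class counts are integers. The only point deserving a little care is the third condition, which is not an independent count but follows by combining the edge count with the first step; one should note that resolvability is exactly what guarantees that every parallel class has the same number $v/|V(G)|$ of blocks, which is the fact being used when passing from $b$ to $r$.
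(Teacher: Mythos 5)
Your proof is correct and is precisely the ``simple calculation'' the paper invokes without writing out: partitioning the vertex set by one parallel class, counting edges to get the block count $b=\lambda v(v-1)/(2\mid\! E(G)\!\mid)$, and dividing $b$ by the $v/\mid\! V(G)\!\mid$ blocks per class to get the integrality of the number $r$ of classes. No discrepancy with the paper's (unstated) argument.
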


The Kirkman schoolgirl problem has developed the following question in the theory of resolvable $G$-designs: ``For a fixed graph $G$ and a index $\lambda$, what are necessary and sufficient conditions for the existence
of a resolvable  $(\lambda K_{v},G)$-design?"
The Kirkman schoolgirl problem is  this question for $G=K_3$ and $\lambda=1$, posed by Kirkman ($\cite{K}$) in $1847$ and solved by
Hanani, Ray-Chaudhuri and Wilson ($\cite{HRW}$) in 1969. The question has been studied for: $G=K_4$ and $\lambda =1,3$ by
Hanani, Ray-Chaudhuri and Wilson ($\cite{HRW}$);  $G=K_3$ and $\lambda =2$ by Hanani ($\cite{HA}$); $G=P_3$ and every admissible $\lambda$ by Horton ($\cite{H}$); $G=P_k, k\geq4$ and every admissible $\lambda$
by Bermond, Heinrich and Yu ($\cite{BHY}$); $ G=K_4-e$ and $\lambda =1$ by Ge, Ling, Colbourn,  Stinson, Whang and  Zhu ($\cite{CSZ,GL,W}$).

The existence of a resolvable decomposition  when $G$ is a subgraph of $K_4$ was studied separately already long ago:
\begin{itemize}
\item
There exists a resolvable $(\lambda K_{v},K_{2})$-design
 if and only if \ $v \equiv 0\pmod {2}$ and $\lambda \geq1$.
 \item
There exists a resolvable $(\lambda K_{v},P_{3})$-design
 if and only if \ $v \equiv 0\pmod {3}$ and $\lambda (v-1) \equiv0 \pmod{4}$ \ (\cite{H}).
\item
There exists a resolvable $(\lambda K_{v},K_{3})$-design
if and only if $\lambda \equiv0 \pmod{2}$  for \ $v \equiv 0\pmod {3}$, $v\neq6$,  or
 $\lambda \geq1$ for  \ $v \equiv 3\pmod {6}$  (\cite{HA, HRW}).
 \item
There exists a resolvable $(\lambda K_{v},P_{4})$-design
 if and only if \ $v \equiv 0\pmod {4}$ and $4 \lambda (v-1) \equiv0 \pmod{6}$ \ (\cite{BHY}).
\item
There exists a resolvable $( K_{v},K_{4}-e)$-design
 if and only if $v \equiv 16\pmod{20}$ $v\equiv 116\pmod{120}$ (\cite{CSZ,GL,W1}).
 \item
There exists a resolvable $(\lambda K_{v},K_{4})$-design
 if and only if $\lambda \equiv 0 \pmod{3}$ for \ $v \equiv 0,8\pmod {12}$  or $\lambda \geq 1$
 for \ $v \equiv 4\pmod {12}$ (\cite{HA, HRW}).
\end{itemize}
In this paper we shall focus our attention on the  problem  of the existence of  resolvable $(\lambda K_{v},G)$-designs when $G= C_4, K_3+e, K_{1,3}, K_4-e$, solving the spectrum problem for any connected subgraph of  $K_4$. 

 \bigskip \bigskip

In what follows, we will denote:
\begin{itemize}
\item
by $[a_1,a_2,\ldots, a_k]$  the path $P_k$, $k\geq 3$, having vertex set $\{a_1,a_2,\ldots, a_k\}$
and edge set $\{\{a_1,a_2\}, \{a_2,a_3\}, \ldots, \{a_{k-1},a_k\}\}$,
\item
by $(a_1, a_2,a_3,a_4)$ the 4-cycle $C_4$ having vertex set $\{a_1,a_2,a_3,a_4\}$ and edge set $\{\{a_1,a_2\},  \{ a_2, a_3\}, \{a_{3},a_4\}, \{a_{4},a_1\} \}$,
\item
by $(a_1;a_2,a_3,a_4)$ the {\em $3$-star\/} $K_{1,3}$ having vertex set $\{a_1,a_2, a_3,a_4\}$ and edge set $\{\{a_1,a_2\}, \{a_1,a_3\}, \{a_1,a_4\}\}$,
\item
by $(a_1,a_2,a_3;a_4)$ the graph $K_{4}-e$ having vertex set $\{a_1,a_2,a_3,a_4\}$ and edge set $\{\{a_1,a_2\}, \{a_1,a_3\}, \{a_2,a_3\}, \{a_1,a_4\}, \{a_2,a_4\}\}$ and
\item
 by $(a_1,a_2,a_3-a_4)$ the {\em kite\/} $K_{3}+e$ having vertex set $\{a_1,a_2,a_3,a_4\}$ and edge set $\{\{a_1,a_2\}, \{a_1,a_3\}, \{a_2,a_3\}, \{a_3,a_4\}\}$.
\end{itemize}

\section{Necessary conditions}

In this section we will give necessary conditions for the existence
of a resolvable $(\lambda K_{v},G)$-design. 

\begin{lemma}
\label{lemmaP1} If there exists a resolvable $(\lambda K_{v},G)$-design, with $G\in \{C_{4}, K_{3}+e\}$,
 then \ $v \equiv 0\pmod {4}$ and $\lambda \equiv 0 \pmod{2}$.
\end{lemma}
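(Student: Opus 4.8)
The plan is to specialize the general divisibility conditions of Lemma~\ref{lemmaP} to the two graphs at hand, both of which have $|V(G)|=4$ and $|E(G)|=4$, and then to extract the extra constraint $\lambda\equiv 0\pmod 2$ from a parity/counting argument on parallel classes. First I would record the parameters: for $G=C_4$ the cycle has $4$ vertices and $4$ edges, and for $G=K_3+e$ the kite likewise has $4$ vertices and $4$ edges, so in both cases $|V(G)|=|E(G)|=4$. Feeding $|V(G)|=4$ into the first congruence of Lemma~\ref{lemmaP} gives immediately $v\equiv 0\pmod 4$, which settles the first half of the claim. This step is purely mechanical.

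Next I would look at the resolvability condition $\lambda(v-1)|V(G)|\equiv 0\pmod{2|E(G)|}$, which with $|V(G)|=|E(G)|=4$ becomes $4\lambda(v-1)\equiv 0\pmod 8$, i.e.\ $\lambda(v-1)\equiv 0\pmod 2$. Since we already have $v\equiv 0\pmod 4$, the factor $v-1$ is odd, so the congruence forces $\lambda\equiv 0\pmod 2$. This is the cleanest route: the parity of $v-1$ does the work once $v\equiv 0\pmod 4$ is in hand. I would double-check that the third congruence of Lemma~\ref{lemmaP} (the edge-count condition $\lambda v(v-1)\equiv 0\pmod{2|E(G)|}$, i.e.\ $\lambda v(v-1)\equiv 0\pmod 8$) is consistent and not needed for the sharper conclusion, so that the argument rests only on the resolvability divisibility.

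The step I expect to require the most care is justifying the resolvability congruence itself in a self-contained way, in case one does not want to invoke Lemma~\ref{lemmaP} as a black box: the number of blocks in a single parallel class is $v/|V(G)|$, since each class partitions the $v$ vertices into copies of $G$, and each block contributes $|E(G)|$ edges, so one parallel class uses $v|E(G)|/|V(G)|$ edges; dividing the total edge count $\lambda v(v-1)/2$ of $\lambda K_v$ by this quantity must give an integer, namely the number $r$ of parallel classes, and $r=\lambda(v-1)|V(G)|/(2|E(G)|)$ must be a positive integer. Writing this out with $|V(G)|=|E(G)|=4$ recovers $r=\lambda(v-1)/2\in\ZZ$, and combining with the odd parity of $v-1$ yields $\lambda\equiv 0\pmod 2$. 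The only genuine subtlety is to note that $v\equiv 0\pmod 4$ (not merely $v\equiv 0\pmod 2$) is what guarantees $v-1$ is odd and hence that the $2$ cannot be absorbed by $v-1$; I would state this explicitly to close the argument.
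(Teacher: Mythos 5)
Your proof is correct and follows essentially the same route as the paper: specialize Lemma~\ref{lemmaP} to $|V(G)|=|E(G)|=4$, obtaining $v\equiv 0\pmod 4$ and $\lambda(v-1)\equiv 0\pmod 2$, then use the oddness of $v-1$ to force $\lambda\equiv 0\pmod 2$. The paper leaves this last parity step implicit (``and so the conclusion follows''), which you simply spell out explicitly, together with a self-contained derivation of the class-counting congruence that the paper takes from Lemma~\ref{lemmaP}.
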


\begin{proof}
By Lemma  \ref{lemmaP} 
$$v \equiv 0\!\!\!\!\pmod {4},\ \ \lambda v(v-1) \equiv 0 \!\!\!\!\pmod{8},  \ \  \lambda (v-1) \equiv 0 \!\!\!\!\pmod{2},$$
and so the conclusion follows.
\end{proof}

\begin{lemma}
\label{lemmaP2} If there exists a resolvable $(\lambda K_{v},K_{1,3})$-design,
 then $v \equiv 0\pmod {4}$ and, in particular: 
 \begin{itemize}
 \item[$i)$] if \ $v \equiv 4\pmod {12}$, then $\lambda \equiv 0 \pmod{2}$;
 \item[$i)$] if  \ $v \equiv 0,8\pmod {12}$, then $\lambda \equiv 0 \pmod{6}$.
 \end{itemize}
\end{lemma}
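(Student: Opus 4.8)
The plan is to start from Lemma~\ref{lemmaP}, which with $|V(K_{1,3})|=4$ and $|E(K_{1,3})|=3$ immediately yields $v\equiv 0\pmod 4$ together with $4\lambda(v-1)\equiv 0\pmod 6$, i.e.\ $\lambda(v-1)\equiv 0\pmod 3$. The whole point is then to sharpen this last divisibility to $\lambda(v-1)\equiv 0\pmod 6$, gaining one extra factor of $2$, by exploiting the fact that the star $K_{1,3}$ is \emph{not} regular. I would obtain this through a local degree count at a single vertex, combined with resolvability.

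First I would pin down the number $r$ of parallel classes. Each class partitions the $v$ vertices into $v/4$ copies of $K_{1,3}$, hence consists of $3v/4$ edges; since $\lambda K_v$ carries $\lambda v(v-1)/2$ edges in all, dividing gives $r=2\lambda(v-1)/3$, which is a genuine integer precisely because $\lambda(v-1)\equiv 0\pmod 3$. Then comes the key step: fix a vertex $x$. By resolvability $x$ lies in exactly one block of each class, so $x$ lies in exactly $r$ blocks altogether, and in each such block it is either the centre (contributing degree $3$) or a leaf (contributing degree $1$). Writing $c$ for the number of blocks in which $x$ is the centre, the degree of $x$ in $\lambda K_v$ gives $3c+(r-c)=\lambda(v-1)$, whence $2c=\lambda(v-1)-r=\lambda(v-1)/3$ and $c=\lambda(v-1)/6$. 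Since $c$ must be a nonnegative integer, this forces $\lambda(v-1)\equiv 0\pmod 6$; the extra factor $2$ beyond Lemma~\ref{lemmaP} is exactly the centre/leaf asymmetry, and isolating this identity is the one genuinely non-routine part of the argument.

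It then remains to split the admissible $v\equiv 0\pmod 4$ according to its residue modulo $12$, namely $v\equiv 0,4,8\pmod{12}$, and read off the consequences of $\lambda(v-1)\equiv 0\pmod 6$. When $v\equiv 4\pmod{12}$ the integer $v-1$ is odd and divisible by $3$, so the $\pmod 3$ part is automatic and the condition collapses to $\lambda\equiv 0\pmod 2$; when $v\equiv 0,8\pmod{12}$ one has $v-1\equiv 11,7\pmod{12}$, so $\gcd(v-1,6)=1$ and the full $\lambda\equiv 0\pmod 6$ is forced. This final case distinction is routine arithmetic, so the substance of the proof lies entirely in the centre-counting identity $c=\lambda(v-1)/6$.
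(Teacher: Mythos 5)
Your proof is correct and takes essentially the same route as the paper: the centre/leaf degree count at a fixed vertex, which forces $c=\lambda(v-1)/6$ to be an integer (equivalently, the number $r=2\lambda(v-1)/3$ of parallel classes to be divisible by $4$), is precisely the argument of Lemma 2.1 of \cite{KLMT} that the paper's proof invokes, after which the same residue analysis modulo $12$ finishes the claim. The only difference is that you write the counting argument out explicitly, whereas the paper delegates it to the cited reference.
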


\begin{proof}
By Lemma  \ref{lemmaP}  
$$v \equiv 0\!\!\!\!\pmod {4},\ \ \lambda v(v-1) \equiv 0 \!\!\!\!\pmod{6}, \ \ 2\lambda (v-1) \equiv 0 \!\!\!\!\pmod{3}.$$
Now proceeding as in the proof of Lemma 2.1 of \cite{KLMT} the number of the parallel classes must be $\equiv0 \pmod{4}$ and the conclusion follows.
\end{proof}

\begin{lemma}
\label{lemmaP3} If there exists a resolvable $(\lambda K_{v},K_{4}-e)$-design, 
 then $v \equiv 0\pmod {4}$ and, in particular: 
 \begin{itemize}
 \item[$i)$] if \ $v \equiv0, 4,8,12 \pmod {20}$, then $\lambda \equiv 0 \pmod{5}$;
 \item[$i)$] if  \ $v \equiv 16\pmod {20}$, then $\lambda$ is any potisive integer.
 \end{itemize}
\end{lemma}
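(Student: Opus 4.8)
The plan is to derive everything from Lemma~\ref{lemmaP} specialized to $G = K_4-e$, for which $|V(G)| = 4$ and $|E(G)| = 5$, and then to run a short residue analysis modulo $5$ (equivalently modulo $20$, once $v \equiv 0 \pmod 4$ is in hand). First I would substitute $|V(G)| = 4$ and $2|E(G)| = 10$ into the three conclusions of Lemma~\ref{lemmaP}, obtaining $v \equiv 0 \pmod 4$, $\lambda v(v-1) \equiv 0 \pmod{10}$, and $4\lambda(v-1) \equiv 0 \pmod{10}$. The last congruence simplifies: dividing by $2$ gives $2\lambda(v-1) \equiv 0 \pmod 5$, and since $\gcd(2,5)=1$ this is equivalent to $\lambda(v-1) \equiv 0 \pmod 5$, which will be the governing condition.

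Next I would fix $v \equiv 0 \pmod 4$ and split according to $v \bmod 20$; the admissible residues are $0, 4, 8, 12, 16$. Reducing each modulo $5$ gives $v \equiv 0, 4, 3, 2, 1 \pmod 5$ respectively, hence $v-1 \equiv 4, 3, 2, 1, 0 \pmod 5$. In the first four cases $v-1 \not\equiv 0 \pmod 5$, so $\lambda(v-1) \equiv 0 \pmod 5$ forces $\lambda \equiv 0 \pmod 5$; in the case $v \equiv 16 \pmod{20}$ we have $v-1 \equiv 0 \pmod 5$, so the condition holds for every positive integer $\lambda$. I would also check that the second congruence $\lambda v(v-1) \equiv 0 \pmod{10}$ imposes nothing further: the product $v(v-1)$ is always even, and whenever $\lambda \equiv 0 \pmod 5$ (cases $0,4,8,12$) or $v \equiv 1 \pmod 5$ (case $16$) the missing factor of $5$ is supplied, so this congruence is automatic. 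This yields exactly the two itemized statements.

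The one point that genuinely needs care — the analogue of the extra step used for $K_{1,3}$ in Lemma~\ref{lemmaP2} — is to confirm that resolvability does not force a stronger condition here. I would verify this by a counting argument: a parallel class consists of $v/4$ blocks, so the number of parallel classes is $r = \frac{\lambda v(v-1)/2}{5v/4} = \frac{2\lambda(v-1)}{5}$, and since each block of $K_4-e$ has two vertices of degree $3$ and two of degree $2$, each vertex of $\lambda K_v$ plays the degree-$3$ role in exactly $\lambda(v-1) - 2r = \lambda(v-1)/5$ of its $r$ blocks. Both $r$ and this per-vertex count are nonnegative integers precisely when $\lambda(v-1) \equiv 0 \pmod 5$, which is already the condition extracted from Lemma~\ref{lemmaP}. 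Thus, in contrast to the $K_{1,3}$ case, the resolvability bookkeeping produces no new divisibility, and the necessary conditions are exactly those listed.
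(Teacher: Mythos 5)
Your proposal is correct and takes essentially the same approach as the paper: the paper's proof simply specializes Lemma~\ref{lemmaP} to $|V(G)|=4$, $2|E(G)|=10$ and asserts that the resulting congruences ``imply the thesis,'' which is precisely the mod-$20$ residue analysis you spell out (the first four residues force $\lambda\equiv 0\pmod 5$ since $v-1$ is then a unit mod $5$, while $v\equiv 16\pmod{20}$ gives $v-1\equiv 0\pmod 5$). Your extra parallel-class count ($r=2\lambda(v-1)/5$ classes and $\lambda(v-1)/5$ degree-$3$ appearances per vertex) is a correct but unnecessary bonus check, confirming that, unlike the $K_{1,3}$ case of Lemma~\ref{lemmaP2}, resolvability adds no further divisibility condition here.
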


\begin{proof}
By Lemma  \ref{lemmaP}  
$$v \equiv 0\!\!\!\!\pmod {4},\ \ \lambda v(v-1) \equiv 0 \!\!\!\!\pmod{10}, \ \ 2\lambda (v-1) \equiv 0 \!\!\!\!\pmod{5},$$
which implies the thesis.
\end{proof}

\section{Costructions and related structures}

In this section we will introduce some useful definitions. For missing
terms or results that are not explicitly explained in the paper,
the reader is referred to \cite{CD} and its online updates.
For some results below, we also cite this handbook instead of the
 original papers.

 \bigskip

\bigskip

A (resolvable) $G$-decomposition of the complete multipartite
graph with $u$ parts each of size $g$  is known as a (resolvable)
group divisible design $G$-(R)GDD of type $g^u$ (the parts of size
$g$ are called the \textit{groups} of the design). When $G = K_n$ we will
call it an $n$-(R)GDD.
If the blocks of a $G$-GDD of type ${g^u}$ can be partitioned into partial
parallel classes, each of which contains all points except those of one group,
we refer to the decomposition as a {\em frame}. It is easy to deduce that the number of partial parallel classes missing a specified group  is
$\frac{g |V(G)|}{2 |E(G)|}$.

\bigskip

An incomplete resolvable $G$-design of order $v+h$ and index $\lambda$ with  a hole of size $h$ denoted
by $G$-IRD$(v+h,h,\lambda)$, is a
$G$-decomposition of $\lambda (K_{v+h}\setminus K_h)$ in which there are two types of
classes, $\frac{\lambda (h-1) |V(G)|}{2 |E(G)|}$ {\em partial} classes which cover every point except
those in the hole (the set of points of $K_h$ are referred to as the {\em hole}) and  $\frac{\lambda v|V(G)| }{2 |E(G)|}$ {\em full} classes which cover every point of $K_{v+h}$.

\section{Small cases}

\begin{lemma}
\label{lemmaA1} There exists a resolvable $(2 K_{4}, C_{4})$-design.
\end{lemma}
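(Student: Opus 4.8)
The plan is to produce an explicit decomposition, since this is a single small base case. First I would record the counting: because $|V(C_4)| = |E(C_4)| = 4$ and $2K_4$ has $2\binom{4}{2} = 12$ edges, any such design must consist of exactly $12/4 = 3$ blocks. Moreover every block is a $4$-cycle on all four vertices, hence covers each vertex exactly once; consequently each parallel class is forced to consist of a single block. So resolvability reduces to the purely combinatorial task of exhibiting three spanning $4$-cycles on $\{1,2,3,4\}$ whose edge multiset coincides with that of $2K_4$, namely each of the six pairs taken twice.

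The key observation I would use is that $K_4$ has exactly three Hamiltonian cycles, and every edge of $K_4$ lies in precisely two of them. Concretely, I would take the three cycles $(1,2,3,4)$, $(1,2,4,3)$, and $(1,3,2,4)$, whose edge sets are $\{12,23,34,14\}$, $\{12,24,34,13\}$, and $\{13,23,24,14\}$ respectively. A one-line tally then shows that each of the edges $12,13,14,23,24,34$ occurs in exactly two of the three cycles, so together they realize every pair with multiplicity $2$, i.e.\ the edge multiset of $2K_4$.

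Finally I would set $\cP_1$, $\cP_2$, $\cP_3$ to be the three cycles, each regarded as a parallel class. Since each cycle passes through all four vertices, every vertex appears in exactly one block of each $\cP_i$, so $\{\cP_1,\cP_2,\cP_3\}$ is a resolution and we obtain the desired resolvable $(2K_4,C_4)$-design. There is no genuine obstacle here: the only thing to verify is the edge-covering count, which is immediate, and the value $(v,\lambda)=(4,2)$ is exactly the minimal admissible pair permitted by Lemma~\ref{lemmaP1}. The role of this lemma is to serve as an ingredient (a starting block or filling design) for the recursive and group-divisible constructions used in the larger cases.
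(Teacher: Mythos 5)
Your proposal is correct and coincides with the paper's own proof: both take the three Hamiltonian cycles of $K_4$ (which necessarily cover each edge exactly twice), each forming a single-block parallel class. The only difference is the vertex labelling, so there is nothing further to compare.
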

\begin{proof}
Let $V$= $\{0,1,2,3\}$ be  the vertex set and consider the classes listed below:\\
$\{(0,1,2,3)\}$, $\{(0,1,3,2)\}$, $\{(0,2,1,3)\}$.
\end{proof}

\begin{lemma}
\label{lemmaA2} There exists a resolvable $(2 K_{4}, K_{3}+e)$-design.
\end{lemma}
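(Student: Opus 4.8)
The plan is to build the design by a direct construction on the vertex set $V=\{0,1,2,3\}$. First I would record the parameters: since $K_3+e$ has four vertices and four edges while $2K_4$ has $12$ edges, any $(2K_4,K_3+e)$-design consists of exactly $12/4=3$ blocks. Because $|V(K_3+e)|=v=4$, each kite already covers all four vertices, so every parallel class must consist of a single kite; hence a resolvable design is precisely a partition of the edges of $2K_4$ into three kites, each of which automatically forms its own class. This observation reduces the whole problem to a pure edge-decomposition question.

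Next I would translate the covering condition into a statement about the omitted edges. A kite on $\{0,1,2,3\}$ is $K_4$ with two edges deleted, and those two edges always share the pendant vertex $a_4$ (indeed $K_4$ minus the kite $(a_1,a_2,a_3-a_4)$ leaves $\{a_1,a_4\}$ and $\{a_2,a_4\}$); that is, each kite omits a \emph{cherry} (a path $P_3$, two edges meeting at a common vertex). Since each of the six pairs of $2K_4$ has multiplicity two, covering all edges with three kites is equivalent to requiring that each pair be omitted by exactly one of the three kites. As each kite omits exactly one cherry and three cherries account for $3\cdot 2=6$ edge slots, the condition becomes: the three omitted cherries must partition the edge set of $K_4$.

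The crux is therefore to decompose $K_4$ into three copies of $P_3$, which is possible because $K_4$ is connected with an even number of edges. An explicit choice is the three cherries centred at $1$, $2$, $3$, namely $\{01,12\}$, $\{02,23\}$, $\{03,13\}$, whose union is all six pairs with no repetition. Deleting each cherry from $K_4$ yields the three kites that form the classes: $(0,2,3-1)$, $(0,3,1-2)$, $(0,1,2-3)$. Finally I would verify directly that each of the six pairs appears in exactly two of these three kites, which confirms that they decompose $2K_4$ and constitute three parallel classes.

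I expect the only real subtlety, rather than a genuine obstacle, to be the bookkeeping that identifies the deleted edges of a kite as a cherry and the consequent reformulation as a $P_3$-decomposition of $K_4$; once that reformulation is in place the construction is immediate and the remaining verification is a short check of the six edge multiplicities.
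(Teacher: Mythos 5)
Your proof is correct and takes essentially the same route as the paper: both exhibit an explicit decomposition of $2K_4$ into three kites on $\{0,1,2,3\}$, each of which covers all four vertices and hence constitutes its own parallel class. The only difference is presentational --- the paper simply lists its three blocks, while you derive yours systematically by observing that the complement of a kite in $K_4$ is a cherry and reducing to a $P_3$-decomposition of $K_4$; your resulting blocks $(0,2,3-1)$, $(0,3,1-2)$, $(0,1,2-3)$ do cover every pair exactly twice, so the construction is valid.
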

\begin{proof}
Let  $V$= $\{0,1,2,3\}$  be  the vertex set and consider  the classes listed below:\\
$\{(0,2,3-1)\}$, $\{(3,2,1-0)\}$, $\{(2,1,0-3)\}$.
\end{proof}

\begin{lemma}
\label{lemmaA2bis} There exists a resolvable $(2 K_{8}, K_{3}+e)$-design.
\end{lemma}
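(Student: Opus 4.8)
The plan is to build the design by a $1$-rotational construction on the point set $\ZZ_7\cup\{\infty\}$. First I would fix the parameters. Since $|V(K_3+e)|=|E(K_3+e)|=4$, the number of blocks is $\lambda v(v-1)/(2|E(G)|)=2\cdot 8\cdot 7/8=14$, and by the counting behind Lemma~\ref{lemmaP} the number of parallel classes is $\lambda(v-1)|V(G)|/(2|E(G)|)=7$, each class consisting of exactly $8/4=2$ kites. The idea is then to exhibit a single \emph{base} parallel class $\cP_0$ on $\ZZ_7\cup\{\infty\}$ and to obtain $\cP_1,\dots,\cP_6$ by the translations $x\mapsto x+i \pmod 7$ (fixing $\infty$), $i=1,\dots,6$.

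Under such a development the coverage is governed by differences. The edges of $2K_8$ split into the pure differences $1,2,3$ in $\ZZ_7$ (each realized by $7$ distinct edges) and the edges through $\infty$ (again $7$ of them), and in $2K_8$ each edge must be covered twice. Since the $\ZZ_7$-orbit of any edge of a fixed pure difference $d$ runs over all seven edges of difference $d$, and likewise the orbit of an edge $\{\infty,x\}$ runs over all seven edges at $\infty$, the development covers every edge exactly twice precisely when $\cP_0$ realizes each pure difference $d\in\{1,2,3\}$ exactly twice and contains exactly two edges incident with $\infty$.

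This reduces the whole problem to finding two vertex-disjoint kites on $\ZZ_7\cup\{\infty\}$ whose union is all eight points and whose eight edges realize the difference list $\{1,1,2,2,3,3\}$ together with two edges at $\infty$. The main (and essentially only) obstacle is this finite search. Since only one kite may contain $\infty$, that vertex must occupy a degree-$2$ position of its kite so as to contribute exactly two edges at $\infty$; the remaining two edges of that kite, together with the four edges of the second all-finite kite, must then account for the six pure differences. One checks that $\cP_0=\{(\infty,5,3-6),(1,2,0-4)\}$ works: its vertices partition $\ZZ_7\cup\{\infty\}$, the edges through $\infty$ are $\{\infty,5\}$ and $\{\infty,3\}$, and the remaining edges $\{5,3\},\{3,6\},\{1,2\},\{1,0\},\{2,0\},\{0,4\}$ realize the differences $2,3,1,1,2,3$, i.e.\ two copies of each of $1,2,3$.

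It then remains only to confirm the two defining properties. Each translate $\cP_i$ covers every point exactly once because $\cP_0$ does and translation is a bijection fixing $\infty$; and the orbit/difference count above shows that the resulting $14$ blocks cover every edge of $2K_8$ exactly twice and none more than twice. Hence $\cP_0,\dots,\cP_6$ form a resolvable $(2K_8,K_3+e)$-design.
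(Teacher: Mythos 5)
Your proposal is correct and is essentially the paper's own proof: the paper also takes the point set $\ZZ_7\cup\{\infty\}$ and develops one base parallel class of two kites (namely $\{(\infty,1,5-6),(0,4,2-3)\}$) over $\ZZ_7$, which satisfies the same difference conditions (each pure difference $1,2,3$ twice, plus two edges at $\infty$) that you verify for your class $\{(\infty,5,3-6),(1,2,0-4)\}$. The only difference is the particular choice of base blocks, and that you make explicit the difference-counting justification the paper leaves implicit.
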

\begin{proof}
Let $V$= $Z_{7}\cup \{\infty \}$  be  the vertex set. The desired design is obtained by developing in $Z_{7}$ the 
following base blocks: 
$\{(\infty, 1,5-6),(0,4,2-3)\}$.
\end{proof}

\begin{lemma}
\label{lemmaA3} There exists a resolvable $(2 K_{4},K_{1,3})$-design.
\end{lemma}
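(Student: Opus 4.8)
The plan is to give an explicit construction, in the same spirit as the preceding small cases. First I would pin down the parameters: the graph $2K_4$ has $2\binom{4}{2}=12$ edges and each $K_{1,3}$ uses $3$ edges, so a resolvable $(2K_4,K_{1,3})$-design must have exactly $4$ blocks. Since $|V(K_{1,3})|=4=v$, a parallel class that covers every vertex exactly once can only be a single $3$-star; hence there are precisely $4$ parallel classes, each consisting of one block. (This agrees with the general count $\frac{\lambda(v-1)|V(G)|}{2|E(G)|}=\frac{2\cdot 3\cdot 4}{2\cdot 3}=4$.)

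Next I would fix the combinatorial structure by a degree argument. In $2K_4$ every vertex has degree $6$. Over the four classes, a given vertex serves as the centre of its star (contributing $3$ to its degree) in some classes and as a leaf (contributing $1$) in the others; if it is a centre in $c$ of them, its total degree is $3c+(4-c)=2c+4$, and setting this equal to $6$ forces $c=1$. Thus each of the four vertices must be the centre of exactly one class.

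This essentially determines the design up to relabelling: take $V=\{0,1,2,3\}$ and the four classes $\{(0;1,2,3)\}$, $\{(1;0,2,3)\}$, $\{(2;0,1,3)\}$, $\{(3;0,1,2)\}$, one for each choice of centre. I would finish by checking the multiplicities: each unordered pair $\{i,j\}$ appears exactly twice among the produced edges, once from the star centred at $i$ and once from the star centred at $j$, which is precisely index $\lambda=2$.

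Since the whole argument is explicit, there is no real obstacle here; the only thing to verify is the edge count, and the centre-counting argument makes this automatic. If anything, the mild subtlety is simply recognising at the outset that $v=|V(K_{1,3})|$ collapses each parallel class to a single block, after which the degree equation dictates the construction.
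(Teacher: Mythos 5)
Your proposal is correct and is essentially the paper's own construction: developing the base block $(0;1,2,3)$ in $Z_4$, as the paper does, yields exactly your four single-star classes, one centred at each vertex. Your additional degree argument (each vertex must be a centre exactly once) is a nice touch but not needed beyond the direct verification.
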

\begin{proof}
Let $V$= $Z_{4}$ be the vertex set. The desired design is obtained by developing in $Z_{4}$ the  base block 
$\{(0;1,2,3)\}$.
\end{proof}

\begin{lemma}
\label{lemmaA4} There exists a resolvable $K_{1,3}$-RGDD of type $4^{2}$ and index $6$.
\end{lemma}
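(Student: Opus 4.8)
The plan is to realise the design explicitly on the eight points split into the two groups $A=\{a_0,a_1,a_2,a_3\}$ and $B=\{b_0,b_1,b_2,b_3\}$, with subscripts read in $\Z_4$, so that the graph to be decomposed is $6K_{4,4}$: all sixteen edges joining $A$ to $B$, each taken with multiplicity $6$. First I would pin down the shape of a parallel class. Since every edge runs between $A$ and $B$, each copy of $K_{1,3}$ has its centre in one group and its three leaves in the other; a class that covers all eight points with blocks of size $4$ therefore consists of exactly two stars, one centred in $A$ and one centred in $B$. Requiring that each of the four $A$-points and each of the four $B$-points be met once then forces, for some pair $(i,j)$, the two stars to be $(a_i; B\setminus\{b_j\})$ and $(b_j; A\setminus\{a_i\})$. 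Thus a parallel class is completely determined by a pair $(i,j)\in\Z_4\times\Z_4$.

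The construction is then simply to take, for every one of the sixteen pairs $(i,j)$, the class
\[
\cP_{i,j}=\{\,(a_i;\, B\setminus\{b_j\}),\ (b_j;\, A\setminus\{a_i\})\,\}.
\]
This yields $16$ parallel classes, hence $32$ blocks and $96$ edge-slots, matching the $16\cdot 6=96$ edge-multiplicities of $6K_{4,4}$; so once the decomposition is shown to be exact, resolvability is automatic.

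The core of the argument is a coverage count. Note that $\cP_{i,j}$ covers precisely the six edges incident with $a_i$ or with $b_j$, \emph{except} the edge $a_ib_j$ itself, since the $A$-star omits $b_j$ and the $B$-star omits $a_i$. Consequently a fixed edge $a_pb_q$ is covered exactly when either $i=p$ and $j\ne q$ (three classes, through the $A$-star) or $j=q$ and $i\ne p$ (three classes, through the $B$-star), and it is never covered by $\cP_{p,q}$. This gives total multiplicity $3+3=6$ for every edge, so the $32$ stars decompose $6K_{4,4}$ exactly, as required.

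I expect the only genuinely delicate point to be this final count: confirming that sweeping over all sixteen pairs produces uniform multiplicity $6$ rather than an uneven distribution. Indeed, a naive single cyclic development over $\Z_4$ of one base class would cover the edges of three ``differences'' $j-i$ twice each and leave the fourth difference uncovered; it is precisely the choice of using every pair $(i,j)$ once that balances the coverage. The structural reduction of a class to a pair $(i,j)$ makes everything else routine.
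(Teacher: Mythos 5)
Your construction is correct and is in fact identical to the paper's: the paper's four base classes are exactly the pairs $(i,j)$ with $j-i\equiv 3,0,1,2 \pmod 4$, and developing them modulo $4$ produces precisely your sixteen classes $\cP_{i,j}$, one for each pair. Your direct enumeration over all pairs, together with the multiplicity count $3+3=6$ per edge, is just an unrolled presentation of the same design.
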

\begin{proof}
Take  $\{x_1,x_2,x_3, x_4\}$ and $\{y_1,y_2,y_3, y_4\}$ as groups and consider the classes obtained by developing the following base blocks, reducing subscripts modulo 4:\\
$\{(x_1;y_1, y_2,y_3), (y_4; x_2, x_3, x_4)\}$, $\{(x_1;y_2, y_3,y_4), (y_1; x_2, x_3, x_4)\}$, $\{(x_1;y_3, y_4,y_1), (y_2; x_2, $ $x_3, x_4)\}$, $\{(x_1;y_4, y_1, y_2), (y_3; x_2, x_3, x_4)\}$.
\end{proof}

\begin{lemma}
\label{lemmaA5} There exists a resolvable $K_{1,3}$-RGDD of type $4^{3}$ and index $3$.
\end{lemma}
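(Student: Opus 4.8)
The plan is to realize the twelve points as $Z_4\times\{0,1,2\}$, writing $x_i,y_i,z_i$ ($i\in Z_4$) for the three groups $X,Y,Z$, and to exhibit a short list of base parallel classes that are then developed by the map adding $1$ to every subscript modulo $4$, exactly as in Lemma~\ref{lemmaA4}. A count fixes the parameters first: the complete tripartite graph with parts of size $4$ has $48$ cross-edges, so at index $3$ there are $144$ edges; since $K_{1,3}$ has three edges this forces $48$ blocks, and since each star covers four points each parallel class consists of three stars, giving $48/3=16$ parallel classes. As the $Z_4$-development of one class yields four classes, I need exactly \emph{four} base parallel classes, each a partition of the twelve points into three copies of $K_{1,3}$.

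Next I would set up the difference bookkeeping that governs the index. For each unordered pair of groups, say $\{X,Y\}$, attach to an edge $\{x_i,y_j\}$ the difference $i-j\in Z_4$; the four shifts of a base edge produce precisely the four edges of its difference class, each exactly once. Hence the developed design covers every cross-edge with multiplicity equal to the number of base edges having its difference, and the index-$3$ requirement becomes the purely combinatorial condition that, for each of the three group-pairs and each of the four difference values, the base blocks contain exactly three edges of that difference. Equivalently, for every pair the multiset of differences taken over the four base classes must be $\{0,0,0,1,1,1,2,2,2,3,3,3\}$. Resolvability then comes for free: each base class is by construction a partition of the twelve points, and adding a constant to all subscripts maps a partition to a partition, so all $16$ developed classes are parallel.

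What remains—and what I expect to be the main obstacle—is to produce four explicit partitions into stars whose edge-differences realize this balanced multiset for all three pairs simultaneously. This is a finite but delicate search: within a single base class the distribution of the three star-centres among the groups is already constrained (for instance, placing one centre per group forces the numbers of cross-group leaves to satisfy relations such as $p+q=3$ and $r=p$), and one must juggle these local choices so that the global difference tallies come out exactly balanced rather than merely summing correctly. I would carry this out by fixing a convenient shape for the classes, computing the running difference counts class by class, and adjusting leaf assignments to fill the remaining deficits, then present the resulting four base blocks in the explicit list format used in the preceding lemmas.
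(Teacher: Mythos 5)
Your reduction is set up correctly: with point set $Z_4\times\{0,1,2\}$ and development by $+1$ on subscripts, the design must come from exactly four base parallel classes, and resolvability plus index $3$ is equivalent to the condition you state (for each of the three group pairs, every difference of $Z_4$ occurs exactly three times among the base edges). This is also a genuinely different presentation from the paper's, which works on $Z_{12}$ with groups $3Z_{12}+i$, $i=0,1,2$, and obtains the $16$ classes from one base star $(2;0,7,9)$ — whose twelve translates split into four parallel classes, since $\{0,2,7,9\}$, $\{0,2,7,9\}+4$, $\{0,2,7,9\}+8$ partition $Z_{12}$ — together with the base class $\{(4;0,3,8),(6;1,2,5),(9;7,10,11)\}$ developed into twelve more. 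However, your proposal stops exactly where the content of the lemma lies: no base blocks are exhibited. For an existence statement of this kind the explicit construction \emph{is} the proof; "I would carry this out by \dots adjusting leaf assignments" is a plan for a proof, not a proof.

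Moreover, the gap is not a formality, because the search is obstructed for the most natural class shape. If every base class has one centre per group, say $x_{i_m},y_{j_m},z_{k_m}$, with each star's leaves contained in a single group, then per pair each class realises all differences except one (namely $j_m-i_m$ for $XY$, $k_m-j_m$ for $YZ$, $k_m-i_m$ for $XZ$), so balance would force each of the sequences $(j_m-i_m)$, $(k_m-j_m)$, $(k_m-i_m)$, $m=1,\dots,4$, to run through all of $Z_4$; summing over $m$ gives $2\equiv 2+2\pmod 4$, a contradiction (equivalently, $Z_4$ admits no complete mapping). So mixed star shapes are forced, and some explicit list must be produced. One that works is
\begin{gather*}
\{(x_0; y_0, y_1, z_1),\ (y_3; x_3, z_2, z_3),\ (z_0; x_1, x_2, y_2)\},\\
\{(x_0; y_1, y_2, z_2),\ (y_0; z_0, z_1, x_1),\ (z_3; y_3, x_2, x_3)\},\\
\{(x_3; y_2, y_3, z_3),\ (y_0; z_1, z_2, x_2),\ (z_0; x_0, x_1, y_1)\},\\
\{(x_0; y_1, y_3, z_2),\ (y_0; z_1, z_3, x_2),\ (z_0; x_1, x_3, y_2)\};
\end{gather*}
a direct check shows each class partitions the twelve points and, for each of the three pairs, each difference of $Z_4$ occurs exactly three times, so developing modulo $4$ yields the required RGDD. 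Without such a list (or an argument guaranteeing its existence), the lemma remains unproved as you have written it.
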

\begin{proof}
Take $3Z_{12} +i$, $i=0,1,2$, as groups and the blocks obtained by developing $(2; 0,7,9)$,  which  gives four partial parallel classes, and the base blocks 
$(4; 0,3,8)$,  $(6; 1,2,5)$, $(9; 7,10,11)$ (giving a full parallel classe).
\end{proof}

\begin{lemma}
\label{lemmaA6} There exists a resolvable $(6K_{8},K_{1,3})$-design.
\end{lemma}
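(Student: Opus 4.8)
The plan is to split the eight points into two groups $A$ and $B$ of size $4$ and to handle separately the three edge-types of $6K_8$: the edges inside $A$, the edges inside $B$, and the edges joining $A$ to $B$. Writing $V(6K_8)=A\cup B$ with $|A|=|B|=4$, one has the edge-disjoint decomposition
\[
6K_8 = \big(6K_4 \text{ on } A\big)\ \cup\ \big(6K_4 \text{ on } B\big)\ \cup\ 6K_{4,4},
\]
where $6K_{4,4}$ is the complete bipartite multigraph of index $6$ between $A$ and $B$. I would decompose each piece using the small designs already established in this section and then reassemble the resulting classes into full parallel classes on all eight points.

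For the bipartite part I would invoke Lemma~\ref{lemmaA4}: a resolvable $K_{1,3}$-RGDD of type $4^2$ and index $6$ is precisely a resolvable $K_{1,3}$-decomposition of $6K_{4,4}$, and each of its parallel classes already covers all eight vertices. A count shows this contributes $16$ full parallel classes, since $6K_{4,4}$ has $96$ edges, hence $32$ blocks, hence $16$ classes of two blocks each. For each within-group copy of $6K_4$ I would take three disjoint copies of the resolvable $(2K_4,K_{1,3})$-design of Lemma~\ref{lemmaA3}; each such copy yields $4$ parallel classes, so the $6K_4$ on $A$ produces $12$ classes covering only $A$, and likewise $12$ classes covering only $B$.

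The one point needing care is that a class coming from $6K_4$ on $A$ covers just the four points of $A$ and is therefore \emph{not} a parallel class of the full design on eight points. I would resolve this by pairing the twelve $A$-classes with the twelve $B$-classes one-to-one: the union of the $i$-th class on $A$ (a single $K_{1,3}$ spanning $A$) and the $i$-th class on $B$ (a single $K_{1,3}$ spanning $B$) is a genuine full parallel class of two blocks covering all eight vertices. This yields $12$ further full classes, so altogether $16+12=28$ parallel classes are produced. This matches the required count, as $6K_8$ has $168$ edges, hence $56$ blocks, hence $28$ classes, giving the desired resolvable $(6K_8,K_{1,3})$-design. The essentially only subtlety is this matching of the two families of within-group partial classes; the verification that edge multiplicities come out correctly is automatic once the three edge-types are treated separately, and needs only the routine checks that Lemmas~\ref{lemmaA3} and~\ref{lemmaA4} cover each edge of their respective pieces exactly $6$ times.
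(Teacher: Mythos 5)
Your proposal is correct and is essentially the paper's own construction: the paper also starts from the resolvable $K_{1,3}$-RGDD of type $4^2$ and index $6$ (Lemma~\ref{lemmaA4}) and fills each group with three copies of the resolvable $(2K_4,K_{1,3})$-design of Lemma~\ref{lemmaA3}, your pairing of the twelve $A$-classes with the twelve $B$-classes being exactly the (implicit) resolution step in that filling construction. You have simply written out the class counts and the reassembly that the paper leaves to the reader.
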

\begin{proof}
Start with a resolvable $K_{1,3}$-RGDD of type $4^{2}$ and index $6$, which exists by
Lemma \ref{lemmaA4}, and fill each group of size 4 with a copy of a resolvable $(6K_{4},K_{1,3})$-design, which exists by
Lemma \ref{lemmaA3}. 
\end{proof}

\begin{lemma}
\label{lemmaA61} There exists a resolvable $(6K_{12},K_{1,3})$-design.
\end{lemma}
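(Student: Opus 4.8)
The plan is to mimic the proof of Lemma \ref{lemmaA6}, replacing the group divisible design of type $4^2$ by one of type $4^3$. Since $12 = 4 \cdot 3$, I would partition the $12$ points into three groups of size $4$ and split the edges of $6K_{12}$ into those joining two distinct groups and those lying inside a single group, handling each family separately.

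For the between-group edges, the first step is to produce a $K_{1,3}$-RGDD of type $4^3$ and index $6$. Lemma \ref{lemmaA5} gives such a design of index $3$; taking two disjoint copies of it on the same point set and the same three groups doubles the index to $6$, and the union of the two families of full parallel classes is again resolvable. This covers every edge between distinct groups with multiplicity $6$.

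For the within-group edges, I would fill each of the three groups with a resolvable $(6K_4, K_{1,3})$-design, obtained by superimposing three disjoint copies of the resolvable $(2K_4, K_{1,3})$-design of Lemma \ref{lemmaA3}. Because $|V(K_{1,3})| = 4$ equals the group size, each parallel class of such a filling is a single block; since the three fillings have equally many classes, uniting their $i$-th classes yields a full parallel class on all $12$ points.

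The only delicate point is the bookkeeping of resolvability, and it is routine here. The RGDD of type $4^3$ and index $3$ has $16$ parallel classes, so two copies contribute $32$ full classes, while the three group-fillings combine into $12$ further full classes, for a total of $44$. This agrees with the count forced by $6K_{12}$, which has $6\binom{12}{2} = 396$ edges, hence $132$ blocks partitioned into $132/3 = 44$ parallel classes of three blocks each. Since each resulting class covers all $12$ points exactly once and the three edge-families are pairwise disjoint, the union is the desired resolvable $(6K_{12}, K_{1,3})$-design.
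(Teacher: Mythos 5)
Your proof is correct and follows essentially the same route as the paper: take a resolvable $K_{1,3}$-RGDD of type $4^3$ of index $6$ (built from Lemma \ref{lemmaA5}) and fill each group with a resolvable $(6K_4,K_{1,3})$-design (built from Lemma \ref{lemmaA3}). In fact you are slightly more careful than the paper, which cites Lemma \ref{lemmaA5} for index $6$ and Lemma \ref{lemmaA3} for index $6$ directly, leaving implicit the doubling and tripling of copies that you (correctly) spell out, together with the class count $32+12=44$.
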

\begin{proof}
Start with a resolvable $K_{1,3}$-RGDD of type $4^{3}$ and index $6$, which exists by
Lemma \ref{lemmaA5}, and fill each group of size 4 with a copy of a resolvable $(6K_{4},K_{1,3})$-design, which exists by
Lemma \ref{lemmaA3}. 
\end{proof}

\begin{lemma}
\label{lemma20} There exists a resolvable $(6K_{20},K_{1,3})$-design.
\end{lemma}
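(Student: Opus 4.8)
The plan is to realise the design by the standard ``filling in groups'' technique, exactly as in the proofs of Lemmas \ref{lemmaA6} and \ref{lemmaA61}. Writing $20=4\cdot 5$, I would start from a resolvable $K_{1,3}$-RGDD of type $4^{5}$ and index $6$, and then fill each of the five groups of size $4$ with a copy of a resolvable $(6K_{4},K_{1,3})$-design; the latter exists by taking three copies of the design of Lemma \ref{lemmaA3}. The inter-group edges are then covered by the RGDD and the intra-group edges by the five filled designs, so together they decompose all of $6K_{20}$.

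For resolvability I would keep two kinds of full parallel classes. The RGDD contributes $\frac{6\cdot\binom{5}{2}\cdot 4^{2}}{15}=64$ full classes on the $20$ points. Each filled group carries a resolvable $(6K_{4},K_{1,3})$-design with $\frac{36}{3}=12$ parallel classes on its $4$ points; since all five groups carry the same number of classes, aligning them class-by-class (the $k$-th class of each group taken simultaneously) produces $12$ further full classes on all $20$ points. This yields $64+12=76$ classes, which matches the total $\frac{6\cdot\binom{20}{2}}{15}=76$ demanded by a resolvable $(6K_{20},K_{1,3})$-design, and the parameters agree with the necessary conditions of Lemma \ref{lemmaP2}, since $20\equiv 8\pmod{12}$ and $\lambda=6\equiv 0\pmod 6$.

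The main obstacle is the construction of the resolvable $K_{1,3}$-RGDD of type $4^{5}$ and index $6$, which is not yet available. I would build it by a cyclic difference method over $\ZZ_{20}$, taking as the five groups the cosets of the subgroup $\{0,5,10,15\}$; the intra-group differences are then the nonzero multiples of $5$, and the inter-group differences are the sixteen residues not divisible by $5$. The task is to exhibit a list of base blocks $(c;l_{1},l_{2},l_{3})$ whose edge-differences cover each of these sixteen values exactly six times and whose $\ZZ_{20}$-orbits split into full parallel classes, in the same spirit as Lemmas \ref{lemmaA4} and \ref{lemmaA5} (possibly assembling index $6$ from two copies of an index $3$ RGDD, as in the passage from Lemma \ref{lemmaA5} to Lemma \ref{lemmaA61}). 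Producing base blocks that simultaneously satisfy the difference condition and the resolvability condition is the delicate point; once they are in hand, the filling step above is routine.
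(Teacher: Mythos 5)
Your counting is correct (76 parallel classes in all, 64 from the RGDD and 12 from aligning the filled groups), and the filling framework would indeed work \emph{if} the key ingredient existed. But that is precisely where the proposal breaks down: everything rests on a resolvable $K_{1,3}$-RGDD of type $4^{5}$ and index $6$ (or $3$), which you concede is ``not yet available'' and which you only sketch a hope of finding via differences over $\ZZ_{20}$. The paper supplies resolvable $K_{1,3}$-RGDDs only of types $4^{2}$ (index $6$, Lemma \ref{lemmaA4}) and $4^{3}$ (index $3$, Lemma \ref{lemmaA5}), and no such type-$4^{5}$ design is cited anywhere in the paper or its references. So your argument reduces the lemma to an unproven existence statement of essentially the same difficulty as the lemma itself; as written it is not a proof but a reduction to an open ingredient. (For what it is worth, the obvious parity check does not rule your RGDD out: each of the $20$ points would have to be a star center in exactly $16$ of the $64$ classes, which is consistent --- but consistency is not a construction, and producing base blocks that are simultaneously difference-balanced and resolvable is exactly the delicate step you defer.)

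The paper avoids this issue entirely by a direct construction: it takes the vertex set $Z_{19}\cup\{\infty\}$ and exhibits four explicit base parallel classes of five $3$-stars each, then develops them cyclically modulo $19$, producing $4\cdot 19=76$ parallel classes. If you want to salvage your approach, the honest fix is either to exhibit explicit base blocks for the type-$4^{5}$ RGDD (and verify both the difference count and the resolution), or to abandon the group-divisible route and give a direct base-block construction as the paper does.
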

\begin{proof}
Let $V$= $Z_{19}\cup \{\infty \}$ be the vertex set. The desired design is obtained by developing in $Z_{19}$ the following base blocks:\\
$\{(\infty; 4,5,12), (0;3,9,11), (8;1,2,13), (10; 7,14,18), (15; 6,16,17)\}$,\\
$\{(11;\infty, 9,12), (0;3,4,5), (8;1,2,13), (10; 7,14,18), (15; 6,16,17)\}$, \\
$\{(5;\infty, 3,11), (0;4,9,12), (8;1,2,13), (10; 7,14,18), (15; 6,16,17)\}$,\\
$\{(3;\infty, 9,4), (0;5,12,11), (8;1,2,13), (10; 7,14,18), (15; 6,16,17)\}$.
\end{proof}

\begin{lemma}
\label{lemmaA7} There exists a resolvable $(6K_{24},K_{1,3})$-design.
\end{lemma}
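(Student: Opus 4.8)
The plan is to realise $24$ as $8\cdot 3$ and to use the \emph{filling in groups} technique that already produced Lemmas \ref{lemmaA6} and \ref{lemmaA61}. First I would construct a $K_{1,3}$-RGDD of type $8^{3}$ and index $6$ on a $24$-point set $V$, and then fill each of its three groups of size $8$ with a copy of the resolvable $(6K_{8},K_{1,3})$-design guaranteed by Lemma \ref{lemmaA6}. The edge set of $6K_{24}$ splits into the inter-group edges, which are decomposed into $3$-stars by the RGDD, and the three intra-group copies of $6K_{8}$, which are decomposed by the fillers; hence every edge of $6K_{24}$ is covered exactly once and every block is a $3$-star.

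For resolvability I would glue the classes as in Lemmas \ref{lemmaA6}--\ref{lemmaA61}. The RGDD supplies a family of full parallel classes, each covering all $24$ points by cross-group $3$-stars, and these are kept unchanged. Each filler is itself resolvable, so on a fixed group it yields a list of parallel classes of that $K_{8}$; since all three fillers are copies of the same design their lists have equal length, so aligning them on the three groups and taking unions produces parallel classes of $6K_{24}$ covering all $24$ points. A count confirms the bookkeeping: a resolvable $(6K_{24},K_{1,3})$-design must have $6\binom{24}{2}/3=552$ blocks, hence $552/6=92$ parallel classes; the type-$8^{3}$ RGDD of index $6$ contributes $64$ full classes (its $384$ cross-group stars split $6$ per class) and the aligned fillers contribute $28$, and $64+28=92$, exactly as required. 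I would also note in passing that the necessary conditions of Lemma \ref{lemmaP2} are met, since $v=24\equiv 0\pmod{12}$ forces $\lambda\equiv 0\pmod 6$ and $\lambda=6$ is admissible.

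The main obstacle is the existence of the ingredient $K_{1,3}$-RGDD of type $8^{3}$ and index $6$, the one object not already in hand. I would obtain it by a direct difference construction in the spirit of Lemmas \ref{lemmaA4} and \ref{lemmaA5}, taking the three groups as the cosets of $3\Z_{24}$ in $\Z_{24}$ and developing a set of base classes: in each base class one wants three stars centred in each group, so that the $3{+}1$ versus $1{+}3$ point-incidences exhaust the $8$ points of every group, while the differences realised by the base blocks must cover each inter-group difference with multiplicity $6$. Equivalently, one may inflate the type-$4^{3}$ design of Lemma \ref{lemmaA5} by a factor of $2$ through a Wilson-type product argument, or simply quote a suitable resolvable group divisible design from \cite{CD}. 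If instead a self-contained difference family is preferred, the whole design can be built directly on $\Z_{23}\cup\{\infty\}$ by developing four near-parallel base classes modulo $23$ to obtain the $4\cdot 23=92$ classes, exactly as was done for $v=20$ in Lemma \ref{lemma20}; the only remaining computation is then to verify that these four base classes are near-parallel and that their developed differences cover $\Z_{23}$ six times over.
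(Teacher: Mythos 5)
Your skeleton is sound and your bookkeeping is correct: a type-$8^3$ $K_{1,3}$-RGDD of index $6$ would contribute $64$ full classes, the three aligned copies of the $(6K_8,K_{1,3})$-design of Lemma \ref{lemmaA6} would contribute $28$ more, and $64+28=92$ is indeed the required number of parallel classes. However, the proof has a genuine gap: everything hinges on the existence of a resolvable $K_{1,3}$-RGDD of type $8^{3}$ and index $6$, and you never establish it. None of your fallback options closes the hole. The ``difference construction in the spirit of Lemmas \ref{lemmaA4} and \ref{lemmaA5}'' and the alternative direct construction on $\Z_{23}\cup\{\infty\}$ are announced without a single base block, so they are programs, not proofs. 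Quoting \cite{CD} does not work either: the Handbook catalogues (R)GDDs whose blocks are complete graphs, not resolvable $K_{1,3}$-GDDs. The inflation of Lemma \ref{lemmaA5} by weight $2$ is the most promising route, but it is not a routine ``Wilson-type'' step for resolvable star designs: blowing up a star $(a;b,c,d)$ by weight $2$ yields a $K_{2,6}$, which must be decomposed into $3$-stars in such a way that the twelve stars arising from one original parallel class split into two genuine parallel classes on the $24$ points (for instance pairing $(a_1;b_1,b_2,c_1)$ with $(a_2;c_2,d_1,d_2)$, and $(a_2;b_1,b_2,c_1)$ with $(a_1;c_2,d_1,d_2)$); moreover Lemma \ref{lemmaA5} has index $3$, so the inflation gives type $8^3$ with index $3$ and must then be doubled. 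That verification is precisely the missing content of your proof.

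For comparison, the paper avoids introducing any new ingredient: it starts from a resolvable $2$-RGDD of type $1^{6}$ (a one-factorization of $K_6$), gives weight $4$ to every point, replaces each edge of each one-factor by a copy of the type-$4^{2}$, index-$6$ $K_{1,3}$-RGDD of Lemma \ref{lemmaA4} (the three RGDDs sitting on a one-factor align into $16$ full classes, giving $5\times 16=80$ classes), and finally fills the six groups of size $4$ with copies of the resolvable $(6K_4,K_{1,3})$-design of Lemma \ref{lemmaA3}, whose aligned classes give the remaining $12$, for a total of $92$. If you want to keep your $8^3$ route, you must either write out the inflation argument above in full or exhibit the type-$8^{3}$ RGDD explicitly.
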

\begin{proof}
Start with a resolvable 2-RGDD of type $1^{6}$. Give weight 4 to all points and  replace each edge of a given resolution class with a copy of a  resolvable $K_{1,3}$-RGDD of type $4^{2}$ and index $6$, which exists by Lemma \ref{lemmaA4}. Finally, fill each group of size 4 with a copy of a resolvable $(6K_{4},K_{1,3})$-design, which exists by Lemma \ref{lemmaA3}. 
\end{proof}

\begin{lemma}
\label{lemmaA8} There exists a resolvable $(6K_{36},K_{1,3})$-design.
\end{lemma}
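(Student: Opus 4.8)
The plan is to follow the weighting construction of Lemma~\ref{lemmaA7}, but to replace the master $2$-RGDD of type $1^6$ by a resolvable structure on nine points. Since $36=4\cdot 9$ and $K_9$ admits no $1$-factorization, I would not weight a $2$-RGDD; instead I would start from a $3$-RGDD of type $1^9$, that is, a Kirkman triple system of order $9$ (equivalently the affine plane $AG(2,3)$), which is resolvable into four parallel classes, each a partition of the nine points into three disjoint triples.

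First I would give weight $4$ to every point, so that the nine points become nine groups of size $4$ and the total number of points becomes $36$. Each triple $\{a,b,c\}$ of the master design now spans three of these groups, and on those three groups I would overlay a copy of a resolvable $K_{1,3}$-RGDD of type $4^3$ and index $6$; such a design exists by taking two copies of the one constructed in Lemma~\ref{lemmaA5}, exactly as in the proof of Lemma~\ref{lemmaA61}. Because every pair of points of the Kirkman system lies in precisely one triple, every pair of groups is joined by the edges of exactly one such $4^3$-RGDD, so all between-group edges are covered with index $6$. I would then fill each group of size $4$ with a copy of a resolvable $(6K_4,K_{1,3})$-design from Lemma~\ref{lemmaA3}, which covers the within-group edges, again at index $6$. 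This decomposes the edge set of $6K_{36}$ exactly.

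For resolvability I would synchronise the parallel classes. Inside a fixed resolution class of the Kirkman system the three triples are disjoint and exhaust all nine points, hence all $36$ weighted points; since the three copies of the $4^3$-RGDD carry a common number of full parallel classes, their $i$-th classes union to a full parallel class on $36$ points. Letting the master class and the RGDD class vary produces all the between-group parallel classes, and the aligned classes of the nine filled $(6K_4,K_{1,3})$-designs supply the rest; a direct count gives the expected $\tfrac{2\cdot 6\cdot 35}{3}=140$ parallel classes.

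Every ingredient is already available, so no single step is genuinely hard; the point needing care is the bookkeeping that glues them together. Concretely, I would check that a resolvable $3$-RGDD of type $1^9$ exists, that the $K_{1,3}$-RGDD of type $4^3$ and index $6$ is furnished by two copies of Lemma~\ref{lemmaA5}, and above all that the ingredient RGDDs carry matching numbers of parallel classes so that they can be synchronised into full parallel classes on all $36$ points. This synchronisation, together with the index count, is where I would concentrate the verification.
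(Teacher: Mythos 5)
Your proposal is correct and is essentially the paper's own proof: the paper starts from a resolvable $S(2,3,9)$ (exactly your Kirkman triple system / $3$-RGDD of type $1^9$), gives weight $4$, replaces each triple with a resolvable $K_{1,3}$-RGDD of type $4^3$ and index $6$ obtained from Lemma~\ref{lemmaA5}, and fills the groups using Lemma~\ref{lemmaA3}. Your explicit synchronisation of classes and the count of $140$ parallel classes just spell out what the paper leaves implicit in the phrase ``replace each block of a given resolution class.''
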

\begin{proof}
Start with a resolvable resolvable $S(2,3,9)$. Give weight 4 to all points and  replace each block of a given resolution class with  a copy of a resolvable $K_{1,3}$-RGDD of type $4^{3}$ and index $6$, which exists by Lemma \ref{lemmaA5}. Finally, fill each group of size 4 with a copy of a resolvable $(6K_{4},K_{1,3})$-design, which exists by Lemma \ref{lemmaA3}. 
\end{proof}

\begin{lemma}
\label{lemmaA10} There exists a resolvable $(5 K_{4}, K_{4}-e)$-design.
\end{lemma}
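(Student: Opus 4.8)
The plan is to build the design directly, since with $v=4$ the problem is tiny. Because $K_4-e$ has four vertices, every block already spans all of $V=\{0,1,2,3\}$; consequently a parallel class can contain only one block, so a resolvable design here is simply a list of single-block classes. First I would fix the number of blocks by an edge count: $5K_4$ has $30$ edges and each $K_4-e$ supplies $5$, so exactly $6$ blocks (hence $6$ parallel classes) are required. A quick check against Lemma \ref{lemmaP3} confirms the parameters are admissible, since $v\equiv 4\pmod{20}$ and $\lambda=5\equiv 0\pmod{5}$.

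The second step is the key reduction. Each $K_4-e$ on $\{0,1,2,3\}$ is obtained from $K_4$ by deleting exactly one of its six edges, its \emph{missing edge}. In $5K_4$ every edge of $K_4$ must occur with multiplicity $5$; since each of the $6$ blocks contains a given edge unless that edge happens to be its missing edge, an edge of multiplicity $5$ must fail to appear in exactly one block, i.e. be the missing edge of exactly one block. As there are six edges and six blocks, the whole problem reduces to choosing the six blocks so that their missing edges form a bijection with the six edges of $K_4$.

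Finally I would exhibit such a system explicitly, for instance taking the blocks $(2,3,0;1)$, $(1,3,0;2)$, $(1,2,0;3)$, $(0,3,1;2)$, $(0,2,1;3)$, $(0,1,2;3)$, whose missing edges are $\{0,1\}$, $\{0,2\}$, $\{0,3\}$, $\{1,2\}$, $\{1,3\}$, $\{2,3\}$ respectively, and declare each block to be one parallel class. A direct tally then confirms that each edge of $K_4$ lies in exactly five of the six blocks, so the collection decomposes $5K_4$. The only point to watch is this bookkeeping, namely that the six missing edges are pairwise distinct and exhaust all edges; but since there are precisely six blocks and six edges, any bijective choice succeeds, so no genuine obstacle arises.
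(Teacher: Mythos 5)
Your proof is correct and is essentially the paper's own construction: the paper likewise lists six single-block parallel classes on $\{0,1,2,3\}$ whose missing edges run over all six edges of $K_4$, so each edge appears exactly five times. Your explicit bijection argument for the missing edges is a nice clarification, but the underlying design is the same up to relabelling.
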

\begin{proof}
Let $V$= $\{0,1,2,3\}$ be the vertex set and consider the classes listed below:\\
$\{(1,2,0;3)\}$, $\{(3,0,2;1)\}$, $\{(1,3,0;2)\}$, $\{(2,0,1;3)\}$, $\{(1,0,2;3)\}$, $\{(2,3,1;0)\}$.
\end{proof}

\begin{lemma}
\label{lemmaA11} There exists a resolvable $(5K_{8},K_{4}-e)$-design.
\end{lemma}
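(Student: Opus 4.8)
The plan is to realise the design cyclically on the vertex set $V=Z_{7}\cup\{\infty\}$, exactly as in Lemma \ref{lemmaA2bis}. Since $K_{4}-e$ has $4$ vertices and $5$ edges, a resolvable $(5K_{8},K_{4}-e)$-design must consist of $5\cdot\binom{8}{2}/5=28$ blocks, and these fall into $14$ parallel classes, each a partition of $V$ into two blocks (as $|V(G)|=4$ divides $8$ with quotient $2$). Developing modulo $7$ with $\infty$ fixed turns every base block into a full orbit of length $7$ and every base parallel class into $7$ genuine parallel classes, so I would look for exactly \emph{two} base parallel classes; their translates then yield the required $14$ classes. In each base class one block must contain $\infty$ together with three elements of $Z_{7}$, and the other block must consist of the remaining four elements of $Z_{7}$.

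First I would record the two arithmetic conditions that make such a pair of base classes generate $5K_{8}$. Writing $G=(a_{1},a_{2},a_{3};a_{4})$, the vertex $\infty$ can occupy a position of degree $3$ (namely $a_{1}$ or $a_{2}$) or of degree $2$ (namely $a_{3}$ or $a_{4}$). Developing a block that carries $k$ edges at $\infty$ covers each of the seven edges $\{\infty,j\}$ exactly $k$ times, so the two $\infty$-degrees, one per base class, must sum to $\lambda=5$; hence one base class uses $\infty$ as a degree-$3$ vertex and the other as a degree-$2$ vertex. For the finite part, the two base classes then contain $2+5+3+5=15$ edges inside $Z_{7}$, and developing realises each edge of a given difference $d\in\{1,2,3\}$ exactly as often as $d$ occurs among these fifteen edges; the requirement is therefore that each of the differences $1,2,3$ occur exactly five times.

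The main obstacle is purely the combinatorial search for two base classes meeting these two conditions simultaneously; the binding constraint is the difference $1$, since consecutive pairs are comparatively scarce and must be distributed over the four blocks so as to total five. A short search locates a suitable pair, for instance
$$\{(\infty,0,1;3),\,(2,4,6;5)\}\quad\text{and}\quad\{(3,4,6;\infty),\,(0,1,2;5)\}.$$
Here $\infty$ has degree $3$ in the first class and degree $2$ in the second, so each edge $\{\infty,j\}$ is covered $3+2=5$ times, while counting the differences of the fifteen finite edges gives $1$, $2$ and $3$ each exactly five times. Developing these two base classes modulo $7$ thus produces $14$ parallel classes whose blocks decompose $5K_{8}$ into copies of $K_{4}-e$, and the whole verification reduces to the two bookkeeping checks just described.
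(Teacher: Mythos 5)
Your proposal is correct and takes essentially the same approach as the paper: the paper's own proof also develops two base parallel classes on $Z_7\cup\{\infty\}$ modulo $7$ (namely $\{(\infty,4,5;0),(1,2,3;6)\}$ and $\{(0,3,\infty;2),(1,6,5;4)\}$), with $\infty$ of degree $3$ in one class and degree $2$ in the other. Your alternative base classes $\{(\infty,0,1;3),(2,4,6;5)\}$ and $\{(3,4,6;\infty),(0,1,2;5)\}$ verify correctly --- each difference $1,2,3$ occurs exactly five times among the fifteen finite edges and the $\infty$-degrees sum to $5$ --- so they serve equally well.
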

\begin{proof}
Let $V$= $Z_{7}\cup \{\infty \}$ be the vertex set. The desired design is obtained by developing in $Z_{7}$ the following base blocks:\\
$\{(\infty, 4,5;0), (1,2,3;6)\}$,  $\{(0,3,\infty; 2), (1,6,5;4)\}$.
\end{proof}

\begin{lemma}
\label{lemmaA12} There exists a resolvable $(5K_{12},K_{4}-e)$-design.
\end{lemma}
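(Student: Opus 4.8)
The plan is to mimic the cyclic construction of Lemma \ref{lemmaA11} (the $5K_8$ case), now over the vertex set $V = Z_{11}\cup\{\infty\}$. Since $K_{4}-e$ has $4$ vertices and $5$ edges, a resolvable $(5K_{12},K_{4}-e)$-design has $5\binom{12}{2}/5 = 66$ blocks, partitioned into $66/(12/4) = 22$ parallel classes, each a partition of $V$ into three copies of $K_{4}-e$. Developing one parallel-class base through the $11$ translations of $Z_{11}$ (fixing $\infty$) yields $11$ parallel classes, so I would look for exactly \emph{two} base parallel classes $\mathcal{B}_1,\mathcal{B}_2$, each splitting $V$ into three blocks, and then take all of their translates modulo $11$.

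First I would record what the translation action forces, so as to turn the existence question into a difference condition. In each base class exactly one block contains $\infty$; let $d_i\in\{2,3\}$ be the degree of $\infty$ in that block, according to whether $\infty$ occupies a degree-$2$ position ($a_3$ or $a_4$) or a degree-$3$ position ($a_1$ or $a_2$) of $(a_1,a_2,a_3;a_4)$. After developing, the edges at $\infty$ cover each point of $Z_{11}$ exactly $d_1+d_2$ times, so I need $d_1+d_2 = \lambda = 5$, i.e. $\infty$ must sit in a degree-$3$ position in one base class and a degree-$2$ position in the other; this then makes the $\infty$-edges automatically correct. For the edges inside $Z_{11}$, each edge of difference $d\in\{1,2,3,4,5\}$ is covered once for every base-block edge realizing $d$, so the whole design equals $5K_{12}$ precisely when the multiset of internal base-block edges realizes each of the five differences exactly $5$ times. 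Here the $\infty$-block in the degree-$3$ class contributes $2$ internal edges, the $\infty$-block in the degree-$2$ class contributes $3$, and the four $\infty$-free blocks contribute $5$ each, for a total of $25 = 5\cdot 5$ internal edges — exactly the bookkeeping that must land on $(5,5,5,5,5)$, in the same way the example check for Lemma \ref{lemmaA11} lands on $(5,5,5)$.

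With these constraints in hand, the construction reduces to a purely finite problem: partition $Z_{11}\cup\{\infty\}$ twice into three labelled blocks $(a_1,a_2,a_3;a_4)$ so that $\{d_1,d_2\}=\{2,3\}$ and the difference tally is $(5,5,5,5,5)$. I would solve this by a short search, fixing $\infty$ in a degree-$3$ position of $\mathcal{B}_1$ and a degree-$2$ position of $\mathcal{B}_2$ and then adjusting the remaining labels to balance the five difference classes. The main obstacle is exactly this search step — simultaneously balancing five differences across six blocks with only $25$ edges leaves essentially no slack — but it is a bounded check; once a valid pair $\mathcal{B}_1,\mathcal{B}_2$ is exhibited, developing modulo $11$ produces the $22$ parallel classes and completes the proof.
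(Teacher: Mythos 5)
Your framework is sound and every count you make is correct: two base parallel classes on $Z_{11}\cup\{\infty\}$ developed modulo $11$, with $\infty$ in a degree-$3$ position in one class and a degree-$2$ position in the other (so that $d_1+d_2=5$), and with the $25$ internal base edges realizing each difference $1,2,3,4,5$ exactly five times, would indeed produce the required $22$ parallel classes. But the proof stops exactly where the content of the lemma begins: you never exhibit the two base classes. The consistency of the bookkeeping ($25=5\cdot 5$, $d_1+d_2=5$) is a necessary condition, not an existence argument, and you concede the point yourself by calling the search ``the main obstacle.'' For a lemma of this type the explicit blocks \emph{are} the proof; a reduction to an unperformed finite search is a genuine gap.

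The gap is fillable, so your route is viable once completed: for instance
$\mathcal{B}_1=\{(\infty,0,1;2),\ (8,10,3;5),\ (4,9,6;7)\}$ and
$\mathcal{B}_2=\{(0,1,2;\infty),\ (7,8,3;4),\ (9,10,5;6)\}$
satisfy all of your conditions. Indeed $\infty$ has degree $3$ in $\mathcal{B}_1$ and degree $2$ in $\mathcal{B}_2$, each class partitions the vertex set, and the internal differences are $\{1,2\}\cup\{2,5,4,3,5\}\cup\{5,2,3,3,2\}$ for $\mathcal{B}_1$ and $\{1,1,2\}\cup\{1,4,5,3,4\}\cup\{1,4,5,3,4\}$ for $\mathcal{B}_2$, which together cover each of the five differences exactly five times; developing modulo $11$ then finishes the argument. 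For comparison, the paper takes a structurally different small-case construction: it works on $Z_8\cup\{\infty_1,\infty_2,\infty_3,\infty_4\}$, fills the three $4$-sets $2Z_8$, $2Z_8+1$ and $\{\infty_1,\dots,\infty_4\}$ with copies of the resolvable $(5K_4,K_4-e)$-design of Lemma \ref{lemmaA10} (giving six parallel classes), and develops two explicit base classes modulo $8$ for the remaining sixteen; your single-orbit difference construction over $Z_{11}$ is a clean alternative, but only after the base classes are actually written down.
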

\begin{proof}
Let $Z_{8}\cup \{\infty_1,\infty_2,\infty_3,\infty_4\}$ be the vertex set. The desired design is obtained by filling the sets $2Z_{8} +i$, $i=0,1$, and $ \{\infty_1,\infty_2,\infty_3,\infty_4\}$  with a copy of a resolvable $(5 K_{4}, K_{4}-e)$-design (giving  six parallel classes) and developing the  two sets of base blocks in $Z_{8}$ 
$\{(\infty_1,1,0;2)$, $(\infty_2,4,3;5)$, $(6,7,\infty_3;\infty_4)\}$ and 
$\{(\infty_3,3,0;6)$, $(\infty_4,4,1;7)$, $(2,5,\infty_1;\infty_2)\}$,
which give the remaining parallel classes.
\end{proof}

\begin{lemma}
\label{lemmaA15} There exists a resolvable $(5K_{20},K_{4}-e)$-design.
\end{lemma}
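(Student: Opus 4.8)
The plan is to realize the design directly over the vertex set $Z_{19}\cup\{\infty\}$ by cyclic development, in the spirit of Lemma \ref{lemma20}. First I would pin down the parameters. The multigraph $5K_{20}$ has $5\binom{20}{2}=950$ edges, each block $K_4-e$ has five edges, and each parallel class consists of $20/4=5$ blocks; hence the design must have $950/(5\cdot 5)=38$ parallel classes. Moreover, in $5K_{20}$ every vertex has degree $95$, while in a block $(a_1,a_2,a_3;a_4)\cong K_4-e$ the vertices $a_1,a_2$ have degree $3$ and $a_3,a_4$ have degree $2$; since each class contributes ten degree-$3$ slots and ten degree-$2$ slots, each vertex occurs as a degree-$3$ vertex in $19$ classes and as a degree-$2$ vertex in $19$ classes, consistent with $3\cdot 19+2\cdot 19=95$. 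Because $38=2\cdot 19$, I expect the whole design to be generated from exactly two base parallel classes by developing the finite coordinates modulo $19$ (fixing $\infty$).

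Next I would look for two partitions $\cC_1,\cC_2$ of $Z_{19}\cup\{\infty\}$, each into five copies of $K_4-e$. In each $\cC_i$ one block is an ``$\infty$-block'' on $\{\infty\}$ together with three points of $Z_{19}$, and the remaining four blocks partition the other sixteen points of $Z_{19}$. Developing $\cC_1$ and $\cC_2$ cyclically yields $2\cdot 19=38$ partitions of the vertex set, and since $19$ is prime no block or edge has a short orbit, so these are $38$ genuine parallel classes. It then remains only to check that the multiset of all blocks produced decomposes $5K_{20}$.

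By the standard difference argument this reduces to a balance condition on the base blocks. Developing an edge of finite difference $d\in\{1,\dots,9\}$ sweeps out the whole orbit of $19$ edges of that difference, each exactly once; hence each of the nine finite differences must occur exactly five times among the edges of $\cC_1\cup\cC_2$, and exactly five edges in total must meet $\infty$. Each $\cC_i$ contributes one $\infty$-block, which carries either $2$ or $3$ edges at $\infty$ according to whether $\infty$ occupies a degree-$2$ or a degree-$3$ vertex of that block. I would therefore place $\infty$ at a degree-$3$ vertex in $\cC_1$ and at a degree-$2$ vertex in $\cC_2$, so that the $\infty$-edges total $3+2=5$; the remaining $22+23=45$ finite edges must then realize each difference $1,\dots,9$ precisely five times.

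The main obstacle is purely combinatorial: exhibiting explicit base blocks that are simultaneously a vertex partition in each class and achieve the exact difference balance (each of $1,\dots,9$ five times, together with five edges at $\infty$). This is a finite search, and once a valid pair $\cC_1,\cC_2$ is written down the verification is immediate. As a fallback I would instead argue by filling: construct a $(K_4-e)$-RGDD of type $4^5$ and index $5$ (supplying $32$ parallel classes, each of five transversal blocks) and fill each group of size $4$ with a copy of the resolvable $(5K_4,K_4-e)$-design of Lemma \ref{lemmaA10} (supplying $6$ more), again giving $32+6=38$ classes.
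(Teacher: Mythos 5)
Your reduction of the problem is correct and is exactly the paper's strategy: the paper also works over $Z_{19}\cup\{\infty\}$ and develops two base parallel classes modulo $19$, and your counting (38 classes $=2\cdot 19$, each finite difference $1,\dots,9$ needed exactly five times, five edges at $\infty$, with $\infty$ in a degree-$3$ position in one base class and a degree-$2$ position in the other) is precisely the condition the paper's blocks satisfy. But there is a genuine gap: you never exhibit the base blocks. For an existence lemma of this kind, the explicit blocks \emph{are} the proof; the difference-balance condition you derive is necessary and sufficient for the development to work, but it does not by itself guarantee that a pair of partitions $\cC_1,\cC_2$ meeting it exists --- that is established only by writing one down. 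The paper does so:
$$\{(3,4,15;\infty), (1,18,9;14),(2,0,5;8),(6,10,12;13),(7,16,11;17)\},$$
$$\{(0,\infty, 1;15), (8,10,18;5),(2,16,11;9),(14,13,17;7),(4,6,12;3)\},$$
and one checks directly that each difference $1,\dots,9$ occurs five times and $\infty$ lies on $2+3=5$ edges, exactly as your framework demands. Declaring the search ``finite'' and deferring it leaves the lemma unproved.

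Your fallback route has the same defect one level up: it hinges on a resolvable $(K_4-e)$-RGDD of type $4^5$ with index $5$, which you neither construct nor cite. This object is not among the ingredients the paper draws from the literature (the paper uses a $(K_4-e)$-RGDD of type $5^4$ and index $5$ from Lemma 3.2 of \cite{WS}, and $(K_4-e)$-RGDDs of type $4^u$ appear there only for other values of $u$, e.g.\ $4^6$ in Lemma \ref{lemmaA13}), so its existence cannot be taken for granted; asserting it as a fallback just replaces one unproved existence claim with another. Either route would be fine if completed, but as written neither is.
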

\begin{proof}
Let $V$= $Z_{19}\cup \{\infty \}$ be the vertex set. The desired design is obtained by developing in $Z_{19}$ the following base blocks:\\
$\{(3,4,15;\infty), (1,18,9;14),(2,0,5;8),(6,10,12;13),(7,16,11;17)\}$,\\
$\{(0,\infty, 1;15), (8,10,18;5),(2,16,11;9),(14,13,17;7),(4,6,12;3)\}$.
\end{proof}

\begin{lemma}
\label{lemmaA13} There exists a resolvable $(5K_{24},K_{4}-e)$-design.
\end{lemma}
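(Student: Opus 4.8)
The plan is to give a direct cyclic construction over the point set $Z_{23}\cup\{\infty\}$, exactly parallel to the construction of Lemma~\ref{lemmaA15}. First I would check admissibility: since $24\equiv 4\pmod{20}$, Lemma~\ref{lemmaP3} only requires $\lambda\equiv 0\pmod 5$, so $\lambda=5$ is the smallest admissible index. A count shows that a resolvable $(5K_{24},K_4-e)$-design has $5\binom{24}{2}/5=276$ blocks, partitioned into $276/6=46$ parallel classes, each consisting of $6$ copies of $K_4-e$. Since $46=2\cdot 23$, I would aim to produce two base parallel classes on $Z_{23}\cup\{\infty\}$ and develop them modulo $23$.

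Each base parallel class must consist of $6$ blocks whose vertex sets partition $Z_{23}\cup\{\infty\}$, with $\infty$ fixed and the remaining $23$ points developed cyclically; exactly one block of each class contains $\infty$. The key bookkeeping is the difference/edge count: the $1380$ edges of $5K_{24}$ split into $115=5\cdot 23$ edges at $\infty$ and $1265=5\cdot(11\cdot 23)$ pure edges, so each of the $11$ nonzero difference classes of $Z_{23}$ must be covered exactly five times, and each point must be joined to $\infty$ with multiplicity $5$. Following Lemma~\ref{lemmaA15}, I would place $\infty$ as a degree-$2$ vertex (the ``$;\infty$'' position) in one class and as a degree-$3$ vertex in the other, so that the two $\infty$-blocks contribute multiplicities $2$ and $3$ to each $\infty$-edge, totalling $5$. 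The same two blocks then contribute $2+3=5$ pure differences, while the remaining $10$ blocks contribute $50$, giving exactly $55=5\cdot 11$ pure differences to be distributed five per difference class.

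The main obstacle is a purely combinatorial search: exhibiting explicit base blocks that simultaneously (i) partition $Z_{23}\cup\{\infty\}$ within each class and (ii) realize each of the $11$ differences exactly five times with the prescribed $\infty$-incidences. This is a finite but delicate matching problem, and it is essentially the only content of the proof; once suitable blocks are displayed, verifying the two conditions is a routine check. Should a clean cyclic solution be hard to locate, I would fall back on a group-divisible approach, writing $24=6\cdot 4$ and filling the groups of a resolvable $(K_4-e)$-RGDD of type $4^6$ and index $5$ with copies of the resolvable $(5K_4,K_4-e)$-design of Lemma~\ref{lemmaA10} (or, writing $24=3\cdot 8$, filling a type-$8^3$ index-$5$ RGDD with the design of Lemma~\ref{lemmaA11}); but I expect the direct construction, matching the style already used for orders $8$ and $20$, to be the intended and shortest route.
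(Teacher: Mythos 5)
Your preferred route --- the direct cyclic construction over $Z_{23}\cup\{\infty\}$ --- is not a proof as it stands: as you yourself concede, ``exhibiting explicit base blocks'' is essentially the only content of that argument, and you never produce them. The bookkeeping you set up is correct ($46=2\cdot 23$ parallel classes, so two base classes developed mod $23$; the $11$ difference classes of $Z_{23}$ each to be covered five times; $\infty$ placed once in a degree-$2$ and once in a degree-$3$ position of a $K_4-e$ so that the $\infty$-edges get multiplicity $2+3=5$), and it faithfully mirrors the structure of the order-$20$ construction of Lemma~\ref{lemmaA15}; but a feasibility count is not an existence proof, and the delicate matching problem you defer is precisely where the work lies. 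So, taken on its own, the primary route has a genuine gap.

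What rescues the proposal is your fallback, which is exactly the paper's proof: take a resolvable $(K_4-e)$-RGDD of type $4^6$ and index $5$ (cited from \cite{GL}) and fill each group with a copy of the resolvable $(5K_4,K_4-e)$-design of Lemma~\ref{lemmaA10}. This is complete and correct: the RGDD contributes $5\cdot 240/5/6=40$ full parallel classes, and aligning the six parallel classes of each group-filling design across the six groups contributes $6$ more, for the required $46$. The paper evidently chose this route precisely to avoid the ad hoc search you propose as primary; for orders $8$ and $20$ no suitable RGDD ingredient was available, which is why those cases are done cyclically while order $24$ is not. (Your alternative fallback via a type-$8^3$ index-$5$ RGDD is speculative, since no such ingredient is cited or constructed in the paper.) In short: demote your fallback to the main argument and the proof matches the paper's; as written, the emphasis is inverted.
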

\begin{proof}
Start with a resolvable $(K_4-e)$-RGDD of type $4^{6}$ of index 5 (see \cite{GL}) and fill each group of size 4 with a copy of a resolvable $(5K_{4},K_{4}-e)$-design, which exists by  Lemma \ref{lemmaA10}. \end{proof}

\begin{lemma}
\label{lemma28-8} There exists an incomplete resolvable $(K_4-e)$-design of order $28$ and index $\lambda=5$  with a hole of size $8$.
\end{lemma}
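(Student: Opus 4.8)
The plan is to realise the required object by a direct, (mostly) cyclic construction. Here $v=20$, $h=8$, $\lambda=5$, so by the definition in Section 3 the design must contain exactly $\frac{5\cdot 7\cdot 4}{2\cdot 5}=14$ partial classes (covering the $20$ non-hole points) and $\frac{5\cdot 20\cdot 4}{2\cdot 5}=40$ full classes (covering all $28$ points); since a $K_4-e$ block meets $4$ points, a partial class consists of $5$ blocks and a full class of $7$ blocks, for a total of $14\cdot 5+40\cdot 7=350$ blocks. First I would take the point set to be $Z_{20}\cup H$ with $H=\{\infty_1,\dots,\infty_8\}$ a set of $8$ fixed points playing the role of the hole, and develop all base blocks modulo $20$ while keeping the points of $H$ fixed.

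Next I would search for two \emph{full starters}, i.e.\ two partitions of $Z_{20}\cup H$ into seven $K_4-e$ blocks; developed mod $20$ each yields $20$ full classes, for a total of $40$. The crucial requirement concerns the hole. Writing $K_4-e=(a,b,c;d)$, the two degree-$3$ vertices $a,b$ are adjacent while the two degree-$2$ vertices $c,d$ are not, and since the hole carries no edges, no block may contain two hole points except in the positions $c,d$. Moreover, developing a block mod $20$ makes a hole point adjacent to every element of $Z_{20}$ exactly as many times as its degree in that block (its neighbours, all lying in $Z_{20}$, sweep out each residue that many times). Hence, to get index $5$ on the edges between $H$ and $Z_{20}$, I would arrange that each $\infty_j$ is a degree-$3$ vertex in one full starter and a degree-$2$ vertex in the other, so that every edge $\{\infty_j,x\}$ receives multiplicity $3+2=5$.

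Then I would supply the \emph{partial starters}: partial parallel classes on $Z_{20}$ alone (five blocks partitioning $Z_{20}$ and avoiding $H$). To obtain exactly $14$ partial classes --- not a multiple of the orbit length $20$ --- I would use base blocks with nontrivial stabiliser in $Z_{20}$, exactly as was done in Lemma \ref{lemmaA5}; for instance a starter invariant under $+10$ develops into $10$ classes and one invariant under $+4$ into $4$ classes, giving $10+4=14$. A global edge count confirms that this is consistent: independently of how the hole degrees are split, the two full starters contribute $600$ edges inside $Z_{20}$, leaving the partial starters to supply the remaining $950-600=350$ such edges, which is precisely $14\cdot 5\cdot 5$.

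The only genuine work is the explicit search for these starters so that all conditions hold simultaneously: every nonzero difference of $Z_{20}$ must be covered exactly $5$ times by the within-$Z_{20}$ edges of all base blocks (counted with their orbit lengths), each hole point must have its two block-degrees summing to $5$ with no adjacent pair of hole points, and the short-orbit partial starters must be genuinely invariant under the chosen subgroups. I expect this finite combinatorial search to be the main obstacle; once a suitable list of base blocks is exhibited, verifying that it yields a $(K_4-e)$-IRD$(28,8,5)$ reduces to a routine check of the partition, incidence and difference conditions.
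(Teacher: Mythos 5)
Your framework is sound and in fact mirrors the paper's own method almost exactly: vertex set $Z_{20}\cup\{\infty_1,\dots,\infty_8\}$ with the hole fixed, development modulo $20$, two full starters in which each $\infty_j$ has degree $3$ in one and degree $2$ in the other (so that every mixed edge gets multiplicity $3+2=5$), hole points paired only in the non-adjacent positions, and $14$ partial classes obtained from orbit structures shorter than $20$. Your class counts ($14$ partial, $40$ full) and the edge count ($600+350=950$ inside $Z_{20}$) are all correct. The problem is that this is where your proof stops. The lemma is an existence statement whose entire content is the exhibition of a configuration satisfying all of these constraints \emph{simultaneously} (difference coverage of every difference of $Z_{20}$ exactly five times, the hole-degree split, the tiling property of the short-orbit partial starters, and the partition condition in every class), and you explicitly defer this -- ``the only genuine work is the explicit search'' -- without carrying it out. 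Your necessary conditions and counting arguments do not imply that such starters exist; an arithmetic consistency check is not a construction. As it stands, the proposal is a proof strategy, not a proof.

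For comparison, the paper completes exactly this plan with explicit data, though its realization of the $14$ partial classes differs from your proposed $10+4$ split: it fills the five cosets $5Z_{20}+i$ of the subgroup $\{0,5,10,15\}$ with copies of the resolvable $(5K_4,K_4-e)$-design of Lemma \ref{lemmaA10} (this takes care of differences $5$ and $10$ and yields $6$ partial classes), and develops the two base blocks $(0,3,1;2)$ and $(0,7,6;9)$, whose translates by multiples of $4$ tile $Z_{20}$, giving $4+4=8$ further partial classes; the $40$ full classes come from two explicitly listed full starters of the type you describe. Note also that your $10+4$ alternative would additionally require a $(+10)$-invariant partial starter whose difference contributions mesh with everything else -- plausible, but again something that would have to be exhibited, not assumed. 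To repair your proof, you must either produce the explicit base blocks and verify the difference and partition conditions, or cite/derive their existence from a known result; without that, the gap is essential.
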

\begin{proof} 
Let the vertex set be $Z_{20}\cup \{\infty_1,\infty_2,\ldots,\infty_8\}$. The desired design is obtained by filling each set $5Z_{20} +i$, $i=0,1,2,3,4$, with a copy of a resolvable $(5 K_{4}, K_{4}-e)$-design and developing the base blocks $(0, 3,1;2)$, $(0, 7,6;9)$, each of which gives  four partial parallel classes, and the following base blocks (partitioned into  full parallel classes): \\
$\{(2, 3,\infty_1;\infty_2)$, $(5, 11,\infty_3;\infty_4)$, $(\infty_5,9, 10;12)$, 
$(\infty_6,1, 18;19)$, $(\infty_7,6, 15;17)$, $(\infty_8,$ $7, 13;14)$, $(0,8, 4;16)\}$;\\
$\{(10, 3,\infty_5;\infty_6)$, $(5, 11,\infty_7;\infty_8)$, $(\infty_1,9, 2;12)$, 
$(\infty_2,1, 18;19)$, $(\infty_3,6, 15;17)$, $(\infty_4,$ $7, 13;14)$, $(0,16, 4;8)\}$. 
\end{proof}

\begin{lemma}
\label{lemmaA14} There exists a resolvable $(5K_{36},K_{4}-e)$-design.
\end{lemma}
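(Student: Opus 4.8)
The plan is to reproduce, at order $36 = 4\cdot 9$, the ``resolvable GDD plus filling'' argument already used for order $24$ in Lemma~\ref{lemmaA13}. First I would start from a resolvable $(K_4-e)$-RGDD of type $4^9$ and index $5$, its $36$ points split into nine groups of size $4$. Such a design decomposes five copies of the complete $9$-partite graph (parts of size $4$) into $K_4-e$ blocks and comes partitioned into full parallel classes, each covering all $36$ points; these classes account for every inter-group edge, taken with multiplicity~$5$.

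Next I would fill the nine groups. On each group of four points I would place a copy of a resolvable $(5K_4,K_4-e)$-design, which exists by Lemma~\ref{lemmaA10} and supplies exactly the five copies of the edges internal to that group. Each such copy has six parallel classes, every one a single block on its four points, so for each $k\in\{1,\dots,6\}$ I would amalgamate the nine $k$-th group-classes into one parallel class on all $36$ points. Adjoining these six classes to the classes inherited from the RGDD partitions the blocks of the combined design into full parallel classes. A direct count confirms consistency: the RGDD contributes $576$ blocks in $64$ classes and the fillings contribute $54$ blocks in $6$ classes, for a total of $630$ blocks in $70$ classes of nine blocks each --- precisely the $5\binom{36}{2}/5=630$ blocks and $36/4=9$ blocks per class demanded of a resolvable $(5K_{36},K_4-e)$-design.

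The only real obstacle is the first ingredient, the existence of a resolvable $(K_4-e)$-RGDD of type $4^9$ and index~$5$. The necessary conditions for type $4^u$ at index $5$ are readily seen to hold for every $u\ge 3$, so I would expect this object to be available from the general spectrum of \cite{GL}, exactly as the type-$4^6$ design was invoked in Lemma~\ref{lemmaA13}. Should $u=9$ be an exceptional value there, I would instead manufacture it by weighting a resolvable $S(2,3,9)$: give weight $4$ to every point and replace each triple of a resolution class by a resolvable $(K_4-e)$-RGDD of type $4^3$ and index~$5$, so that verifying the existence of that small type-$4^3$ design becomes the crux. Finally, I note a shortcut: since $36\equiv 16\pmod{20}$, a resolvable $(K_{36},K_4-e)$-design of index~$1$ exists by \cite{CSZ,GL,W1}, and superposing five copies of it yields the required design directly.
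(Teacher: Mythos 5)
Your closing ``shortcut'' is exactly the paper's proof: since $36\equiv 16\pmod{20}$ (and $36\not\equiv 116\pmod{120}$), the result of \cite{GL} supplies a resolvable $(K_{36},K_4-e)$-design of index $1$, and taking each of its parallel classes five times yields the required resolvable $(5K_{36},K_4-e)$-design. So your proposal does contain a complete, correct argument coinciding with the paper's --- but it should be your main proof, not a closing remark, because the route you lead with is not substantiated. Its crux, a resolvable $(K_4-e)$-RGDD of type $4^9$ and index $5$, cannot be extracted from the sources you point to: the complete $9$-partite graph with parts of size $4$ has $576$ edges, which is not divisible by $5$, so no $(K_4-e)$-GDD of type $4^9$ and index $1$ exists at all (resolvable or not); hence the index-$5$ object cannot be obtained by repeating an index-$1$ one and would need a genuine index-$5$ construction. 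The index-$5$ RGDD families of type $4^u$ that this paper actually draws from \cite{GL,WS} all have $u\equiv 1\pmod 5$ (type $4^6$ in Lemma \ref{lemmaA13}, type $4^{1+5(2+6t)}$ in Lemma \ref{lemmaE4}), which excludes $u=9$. Your fallback via a weighted resolvable $S(2,3,9)$ hits the same wall one level down: it needs a resolvable $(K_4-e)$-RGDD of type $4^3$ and index $5$, and again $48$ edges is not divisible by $5$, so this ingredient likewise cannot come from index $1$ and is not among the cited objects. Your bookkeeping ($576+54=630$ blocks in $64+6=70$ classes of $9$ blocks each) is correct, so the RGDD-plus-filling scheme would work if the type-$4^9$ ingredient were available; as written, though, the only self-contained part of your proposal is the superposition of five copies of the index-$1$ design, which is precisely what the paper does.
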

\begin{proof}
Take a resolvable $(K_{36},K_{4}-e)$-design (see \cite{GL}) and repeat the classes 5 times.
\end{proof}

\begin{lemma}
\label{lemma44} There exists a resolvable $(5 K_{44}, K_{4}-e)$-design.
\end{lemma}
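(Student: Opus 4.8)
The plan is to construct the design directly on the vertex set $Z_{43}\cup\{\infty\}$, developing a small number of base parallel classes modulo $43$, in exact analogy with Lemma \ref{lemmaA15} for the case $v=20$ (there one develops in $Z_{19}$ because $20=19+1$; here $44=43+1$ and $43$ is prime, so $Z_{43}$ is the natural acting group). First I would record the global count. Since $K_4-e$ has $4$ vertices and $5$ edges, a resolvable $(5K_{44},K_4-e)$-design has $5\binom{44}{2}/5=\binom{44}{2}=946$ blocks, partitioned into parallel classes of $44/4=11$ blocks each, hence into $946/11=86=2\cdot 43$ classes. This tells me to look for exactly $2$ base parallel classes, each of which develops under $Z_{43}$ into $43$ full parallel classes.

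Each base class must be a set of $11$ blocks isomorphic to $K_4-e$ that partitions the $44$ points: one block contains $\infty$ together with $3$ elements of $Z_{43}$, and the remaining $10$ blocks use the other $40$ elements, so that all $43$ finite points and $\infty$ are each covered exactly once. The second requirement is the index condition: developing the two base classes must cover every pair of finite points exactly $5$ times and every pair $\{\infty,x\}$ exactly $5$ times. For the finite pairs this amounts to checking that the $\pm$-differences arising from the $K_4-e$ edges of the two base classes meet each of the $21$ nonzero difference classes of $Z_{43}$ exactly $5$ times; for the pairs through $\infty$ it amounts to arranging that the degree of $\infty$ in the two blocks containing it sums to $5$ (for instance $\infty$ placed at a degree-$2$ vertex in the first base class and at a degree-$3$ vertex in the second, precisely as in Lemma \ref{lemmaA15}).

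So the core of the proof is to exhibit the two explicit lists of eleven base blocks and then verify these two conditions. Checking the parallel-class property is a routine confirmation that the $44$ points are partitioned; checking the difference condition is a finite tally over the $2\cdot 11\cdot 5=110$ edges involved. I expect the main obstacle to be the search for the base blocks themselves: one must simultaneously realize the partition within each class, the exact $5$-fold covering of all $21$ difference classes, and the degree condition at $\infty$. This is a constrained combinatorial search, best carried out by computer, after which the design is simply displayed and verified.

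As an alternative recursive route, I would note that a $(K_4-e)$-RGDD of type $4^{11}$ and index $5$, if available (cf.\ the use of a type-$4^{6}$ RGDD in Lemma \ref{lemmaA13}), combined with filling each of its eleven groups by a resolvable $(5K_4,K_4-e)$-design from Lemma \ref{lemmaA10}, would yield the same count of $80+6=86$ classes and hence the desired design; but the direct $Z_{43}$ construction avoids any appeal to the existence of that RGDD and is the more likely route here.
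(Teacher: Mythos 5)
Your framework is set up correctly: a resolvable $(5K_{44},K_4-e)$-design has $946$ blocks falling into $86=2\cdot 43$ parallel classes, so two base classes developed under $Z_{43}$ is arithmetically consistent, and your two verification conditions (each of the $21$ difference classes of $Z_{43}$ covered exactly five times, and $\infty$ having degree $2$ in its block in one base class and degree $3$ in the other, exactly as in Lemma \ref{lemmaA15}) are the right ones. But the proof stops precisely where the mathematical content of the lemma begins: no base blocks are exhibited, and you explicitly defer their construction to a computer search that has not been carried out. The existence of a starter simultaneously satisfying the partition condition, the difference condition, and the degree condition at $\infty$ is not a routine consequence of the counting --- it \emph{is} the claim --- so as written this is a plan for a proof, not a proof.

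Your fallback route has the same problem in sharper form: it invokes a $(K_4-e)$-RGDD of type $4^{11}$ and index $5$ whose existence you do not establish. Note that the paper's own Lemma \ref{lemmaE4} uses RGDDs of type $4^{11+30t}$ from \cite{WS} only for $t>0$ and handles $t=0$ (i.e.\ $v=44$) by referring back to Lemma \ref{lemma44}; this strongly suggests the type-$4^{11}$ RGDD of index $5$ was not available, which is exactly why $v=44$ required separate treatment. The paper's actual proof follows neither of your routes: it pastes five (suitably relabeled) copies of the maximum resolvable $(K_4-e)$-packing of order $44$ from \cite{WS} --- each consisting of $17$ parallel classes with a leave of $11$ edges --- choosing the copies so that the five leaves can be rearranged into one further parallel class of copies of $K_4-e$, giving $5\cdot 17+1=86$ classes. (One subtlety your RGDD-style thinking misses: the five copies cannot have identical leaves, since vertex-disjoint blocks of a parallel class cannot absorb a repeated edge, so the relabeling is essential.) To salvage your direct approach you would need to actually produce and verify the two base classes over $Z_{43}$.
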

\begin{proof} It is sufficient to paste five copies of the maximun resolvable  ($K_4-e$)-packing of order $44$ in \cite{WS},  with  leaves such that their
 edges can be suitably arreanged so to give  a new parallel class of copies of $K_{4}-e$.
\end{proof}

\begin{lemma}
\label{lemma68} There exists a resolvable $(5 K_{68}, K_{4}-e)$-design.
\end{lemma}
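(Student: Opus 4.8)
The plan is to realise $v=68$ as $68=3\cdot 20+8$ and to build the design in two stages: first produce an incomplete resolvable $(K_4-e)$-design of order $68$ and index $5$ with a hole of size $8$ (a $(K_4-e)$-IRD$(68,8,5)$ in the notation of Section~3), and then fill the hole with a resolvable $(5K_8,K_4-e)$-design, which exists by Lemma~\ref{lemmaA11}. The choice of group size $20$ and hole size $8$ is forced by the one incomplete ingredient already available, the $(K_4-e)$-IRD$(28,8,5)$ of Lemma~\ref{lemma28-8}: its non-hole part has $28-8=20$ points and its hole has $8$ points, so three such objects sharing a common hole account for exactly $3\cdot 20+8=68$ points.

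Concretely, I would take a $(K_4-e)$-frame of type $20^3$ of index $5$ on a point set $G_1\cup G_2\cup G_3$ (three groups of size $20$), adjoin a set $H$ of $8$ new points, and fill each $G_i\cup H$ with a copy of the IRD of Lemma~\ref{lemma28-8}. The edges are then covered with index $5$ exactly as needed: the frame covers every edge joining two distinct groups (the tripartite part $K_{20,20,20}$), while the three IRDs cover, with index $5$, all edges inside each $G_i$ together with all edges between $G_i$ and $H$; no edge inside $H$ is ever used, which is precisely the requirement for a $(K_4-e)$-IRD$(68,8,5)$.

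The heart of the argument is the matching of resolution classes, and the counts are designed to fit. The frame has $\frac{5\cdot 20\cdot 4}{2\cdot 5}=40$ partial classes missing each group, hence $120$ in total; each copy of Lemma~\ref{lemma28-8} contributes $\frac{5\cdot 7\cdot 4}{2\cdot 5}=14$ partial classes (covering its $20$-point group and missing $H$) and $\frac{5\cdot 20\cdot 4}{2\cdot 5}=40$ full classes (covering the $28$ points of $G_i\cup H$). I would pair each frame class missing $G_i$ with a full class of the IRD on $G_i\cup H$: their union covers $G_j\cup G_k$ and $G_i\cup H$, i.e. all $68$ points, yielding $3\cdot 40=120$ full classes. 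Taking, for each $t=1,\dots,14$, the $t$-th partial class from each of the three IRDs produces $14$ partial classes covering $G_1\cup G_2\cup G_3$ and missing $H$. This is exactly a $(K_4-e)$-IRD$(68,8,5)$, with its $120$ full and $14$ partial classes. Filling $H$ with the $14$ parallel classes of the resolvable $(5K_8,K_4-e)$-design of Lemma~\ref{lemmaA11} turns the $14$ partial classes into full ones, giving $120+14=134=\frac{5\cdot 67\cdot 4}{2\cdot 5}$ parallel classes, the correct number for a resolvable $(5K_{68},K_4-e)$-design.

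The main obstacle is the one ingredient assumed above that is not among the earlier lemmas: a $(K_4-e)$-frame of type $20^3$ of index $5$. Everything else (the IRD of Lemma~\ref{lemma28-8}, the order-$8$ design of Lemma~\ref{lemmaA11}, and the class bookkeeping) is already in place, and the numerical conditions $v\equiv 8\pmod{20}$, $\lambda=5$ hold, consistently with Lemma~\ref{lemmaP3}. I would establish this frame either by citing the relevant $K_4-e$ frame-existence results, or by constructing it directly, for instance by a difference construction on $Z_{60}$ with the three groups taken as the residue classes modulo $3$, or by inflating a suitable small group divisible design by weight and replacing its blocks with copies of a $(K_4-e)$-RGDD of type $4^{k}$, in the spirit of the weighting arguments used in Lemmas~\ref{lemmaA7}--\ref{lemmaA8} and of the $4^{6}$-RGDD invoked in Lemma~\ref{lemmaA13}. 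Verifying the existence of such a frame (with $u=3$ groups, the most restrictive case) and indexing its partial classes so that they mesh with the IRD full classes is the only genuinely new step.
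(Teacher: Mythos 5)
There is a genuine gap, and it is fatal to your construction as stated: a $(K_4-e)$-frame of type $20^3$ (of any index) does not exist. The graph $K_4-e$ contains a triangle, so its chromatic number is $3$; since the groups of a group divisible design induce a proper coloring of every block, any copy of $K_4-e$ in a complete multipartite graph must meet at least three distinct groups. In a frame with only $u=3$ groups, a partial parallel class missing one group would have to consist of copies of $K_4-e$ lying entirely on the remaining \emph{two} groups, i.e.\ inside a bipartite graph --- impossible. So the "only genuinely new step" you defer (establishing the frame by citing frame results or by a difference construction) cannot be carried out, and no weighting or difference method will produce it. This is exactly why the paper, in Lemma \ref{lemmaE6}, uses $(K_4-e)$-frames of type $20^{3+6t}$ only for $t>0$ (at least $9$ groups) and is forced to treat $v=68$ separately. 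Your bookkeeping of classes ($40$ frame classes per group, $14$ partial and $40$ full classes per copy of the IRD of Lemma \ref{lemma28-8}, totalling $120+14=134$) is correct and is precisely the mechanism of Lemma \ref{lemmaE6}; the failure is solely in the $u=3$ ingredient.

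The paper's own proof of Lemma \ref{lemma68} avoids frames altogether: it starts from the minimum resolvable $(K_4-e)$-covering of order $68$ of \cite{SW}, whose excess lies in a single parallel class, so that deleting those edges yields a maximum resolvable $(K_4-e)$-packing of order $68$ whose leave is a parallel class of kites; it then pastes five copies of this packing and rearranges the edges of the five leaves into four new parallel classes of copies of $K_4-e$. If you want to salvage an IRD-style argument for $v=68$, you would need either an incomplete ingredient with a larger hole (so that the frame has at least four groups) or some other structure not available among the paper's lemmas; with only the IRD$(28,8,5)$ of Lemma \ref{lemma28-8} in hand, the group type is forced to $20^3$ and the approach collapses.
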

\begin{proof} Consider  the minimum resolvable  ($K_4-e$)-covering of order $68$ in \cite{SW},  whose excess is contained into one parallel class so that removing its edges gives a maximun resolvable  ($K_4-e$)-packing of order $68$ with  leave a parallel class of kites. Now, paste five copies of the above packing with leaves such that their
 edges can be suitably arreanged into four  new parallel classes of copies of $K_{4}-e$.
\end{proof}

\section{The case $G=C_{4}, K_{3}+e$}

\begin{lemma}
\label{lemmaC1} For every \ $v\equiv 0\pmod{4}$ there exists
a resolvable $( 2 K_{v},C_{4})$-design.
\end{lemma}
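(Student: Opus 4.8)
The plan is to dispose of $v=4$ by the direct construction already recorded in Lemma~\ref{lemmaA1}, and to obtain every larger order by the weight-and-fill method that the paper uses elsewhere in this section. Writing $v=4n$, the whole problem reduces to producing a $C_4$-RGDD of type $4^n$ and index $2$: once such an RGDD is in hand, I would fill each group of size $4$ with a copy of the resolvable $(2K_4,C_4)$-design of Lemma~\ref{lemmaA1}, and synchronise the three parallel classes of the fillings across the $n$ groups into three full classes covering all $4n$ points. The arithmetic matches Lemma~\ref{lemmaP}: the RGDD supplies $4(n-1)$ full classes (its underlying graph $2K_{n\times 4}$ is $8(n-1)$-regular) and the fillings supply $3$, for a total of $4n-1=v-1$.

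To build the RGDDs I would first construct the one indispensable ingredient, a $C_4$-RGDD of type $4^2$ and index $2$, i.e.\ a resolution of $2K_{4,4}$ into four $C_4$-factors; this is a small finite check. For orders where a convenient resolvable master design on $n$ points is available, Wilson's fundamental construction with weight $4$ then does the work: inflating a $1$-factorization of $K_n$ (for $n$ even) or a Kirkman triple system on $n$ points (for $n\equiv 3\pmod 6$), with each master block replaced by the appropriate small $C_4$-RGDD of type $4^2$ or $4^3$, yields a $C_4$-RGDD of type $4^n$ and index $2$. This is exactly the scheme carried out for $K_{1,3}$ in Lemma~\ref{lemmaA7}.

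For the residues of $n$ that admit no such full resolution, I would switch from RGDDs to frames. Inflating a near-$1$-factorization of $K_n$ by weight $4$ produces a $C_4$-frame of type $4^n$ and index $2$, with four partial classes missing each group. To convert this frame into a genuine resolvable design I would adjoin a set of $4$ ideal points, fill one group together with the ideal points by a resolvable $(2K_8,C_4)$-design (built directly by a difference construction of the kind used in Lemma~\ref{lemmaA2bis}), and fill every other group-plus-ideal-set by a $C_4$-IRD of order $8$ with a hole of size $4$ and index $2$. Merging each quadruple of frame-partials missing a group with the four full classes of the corresponding IRD, and then matching the surplus classes on the ideal points against the partial IRD-classes, again closes the bookkeeping to exactly $v-1$ full classes.

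I expect the main obstacle to be precisely this frame-completion step, together with the clerical task of assigning a suitable master design to \emph{every} residue class of $n$. The essential difficulty is an index-$2$ counting mismatch: a $C_4$-frame of type $4^n$ and index $2$ carries four partial classes per group, whereas the internal graph $2K_4$ of a single group decomposes into only three $C_4$-factors, so a naive filling strands one partial class per group. The role of the four adjoined points and the IRD with a hole is exactly to absorb this discrepancy, and verifying the short list of small ingredients (the $C_4$-RGDDs of types $4^2,4^3$, the resolvable $(2K_8,C_4)$-design, and the $C_4$-IRD$(8,4,2)$) is where the genuine work lies. I would also note that the statement is equivalent to the existence of a $C_4$-factorization of $2K_v$ for all $v\equiv 0\pmod 4$, so an alternative, shorter route is simply to cite the relevant factorization result from \cite{CD}.
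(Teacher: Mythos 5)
Your plan would work, but it follows a genuinely different and heavier route than the paper. The paper's proof is a single uniform construction: for $v=4t$, start from a $2$-RGDD of type $2^{t}$ (a one-factorization of the complete multipartite graph with $t$ parts of size $2$, which exists for every $t\geq 2$ with no congruence restriction), give each point weight $2$, and replace each edge of each resolution class by two copies of the $C_4$-decomposition of type $2^{2}$ --- an inflated edge is $K_{2,2}$, which is itself a $4$-cycle; finally the inflated groups, now of size $4$, are filled with the design of Lemma~\ref{lemmaA1}. The $2t-2$ resolution classes inflate to $4t-4$ full classes of $4$-cycles, the fillings give $3$ more, and $4t-1=v-1$ as required. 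Because the master design lives on $v/2$ points with groups of size $2$, there is no parity split and the only small ingredient needed is Lemma~\ref{lemmaA1}. Your weight-$4$ scheme instead places the master design on $n=v/4$ points, which is precisely what forces your case distinction on the parity of $n$ and, for odd $n$, the frame/ideal-point/IRD apparatus. That apparatus does close: $2K_{4,4}$ has four $C_4$-factors, so your $C_4$-RGDD of type $4^{2}$ and index $2$ exists; your $C_4$-IRD$(8,4,2)$ is exactly three classes of Lemma~\ref{lemmaA1} on the non-hole points together with those four factors; and the bookkeeping ($4$ merged classes per group plus $3$ surplus classes matched against the IRD partials) comes to $v-1$. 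The Kirkman-triple-system case in your plan is redundant, since every odd $n$ is already handled by your frame argument. So your route can be completed, at the cost of verifying several auxiliary designs the paper never needs; your closing remark that the statement amounts to a $C_4$-factorization of $2K_v$, citable from \cite{CD}, is also correct and would be the shortest path of all.
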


\begin{proof}

Let $v=4t$. Start from a 2-RGDD $\cG$ of type $2^{t}$ (\cite{CD}). Give weight $2$ to
every point of $\cG$ and for each block of a given
resolution class of $\cG$ place 2 copies  of a $C_{4}$-decomposition  of type
$2^{2}$ (\cite{DQS}). Finally, fill each group of size 4 with  a copy of a resolvable $(2 K_{4},C_{4})$-design
 which exists by Lemma  \ref{lemmaA1}.
\end{proof}

\begin{lemma}
\label{lemmaC2} For every \ $v\equiv 0\pmod{4}$  there exists
a resolvable $(2 K_{v}, K_{3}+e)$-design.
\end{lemma}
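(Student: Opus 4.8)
The plan is to mimic the construction used for Lemma~\ref{lemmaC1}, replacing the final fill with the kite analogue and using a $(K_3+e)$-decomposition in place of the $C_4$-decomposition on the inflated blocks. Write $v=4t$. Start from a $2$-RGDD $\cG$ of type $2^t$, which exists by \cite{CD}. Give weight $2$ to every point of $\cG$: each point becomes a pair, each group of size $2$ becomes a group of size $4$, and each block (an edge joining two points of a resolution class) is blown up into a copy of the complete bipartite multigraph $2K_{2,2}$ on the four inflated vertices, which is exactly the complete multipartite graph of type $2^2$ with index $2$.

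The key step is then to decompose this type-$2^2$ graph (with the doubled edges) into kites in a \emph{resolvable} fashion, analogous to the $C_4$-decomposition of type $2^2$ cited from \cite{DQS}. Concretely, for each block of a given resolution class of $\cG$ I would place two copies of a resolvable $(K_3+e)$-decomposition of the complete bipartite type-$2^2$ graph; since each resolution class of $\cG$ partitions all $4t$ inflated points, assembling these block-decompositions across a single resolution class yields full parallel classes of kites on the whole vertex set, and the resolvability is inherited class by class. The main obstacle is verifying that a kite-decomposition of the type-$2^2$ multipartite graph actually exists and can be made resolvable — the $4$-cycle case is handled by an external reference, but here I should either cite an analogous $(K_3+e)$-GDD/frame result or exhibit an explicit small resolvable kite-decomposition of $2K_{2,2}$ (type $2^2$, index~$2$) by hand, checking that its blocks split into parallel classes covering the four points.

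Finally, fill each group of size $4$ with a copy of a resolvable $(2K_4,K_3+e)$-design, which exists by Lemma~\ref{lemmaA2}. The edges not yet covered inside a group are precisely those of $2K_4$ on that group, so this fill completes the decomposition of $2K_v$ into kites; the parallel classes of each group-fill combine with the full parallel classes from the inflated blocks to give a complete resolution of $2K_v$. Since $\cG$ is resolvable and every ingredient design is resolvable, the resulting $(2K_v,K_3+e)$-design is resolvable, as required. I would close by noting that the necessary conditions of Lemma~\ref{lemmaP1} ($v\equiv0\pmod4$, $\lambda\equiv0\pmod2$) are met, so this construction together with the base case establishes the full spectrum for $\lambda=2$.
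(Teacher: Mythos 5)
Your overall template (inflate a master RGDD, decompose the inflated blocks, fill the groups) is the right shape of construction, but the specific master design you chose makes the key step impossible, not merely unverified. When you give weight $2$ to the points of a $2$-RGDD of type $2^t$, each inflated block is the complete bipartite multigraph $2K_{2,2}$ on two parts of size $2$. A kite $K_3+e$ contains a triangle, whereas $2K_{2,2}$ is bipartite: any three of its vertices include two from the same part, which are non-adjacent, so $2K_{2,2}$ contains no triangle and therefore no copy of $K_3+e$ whatsoever. Hence the object you propose to exhibit or cite --- a (resolvable) kite-decomposition of the complete multipartite graph of type $2^2$ and index $2$ --- does not exist, and raising the index cannot help, since multiplying the edges of a bipartite graph never creates a triangle. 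This is precisely why the $C_4$ construction of Lemma \ref{lemmaC1} does not transfer: $K_{2,2}$ is itself a $4$-cycle, which makes the cited decomposition of \cite{DQS} trivial there, but no weighting of a master design whose blocks have only two points can ever produce the triangles a kite needs, because every inflated block edge joins two distinct groups.

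The paper avoids this obstruction by choosing a master design whose blocks are already kites: for $t>2$ it starts from a $(K_3+e)$-RGDD of type $4^t$ with index $1$, which exists by \cite{W}, repeats each block of a given resolution class twice to reach index $2$, and fills each group of size $4$ with a resolvable $(2K_4,K_3+e)$-design from Lemma \ref{lemmaA2}; the two values outside the range of Wang's result, $v=4$ and $v=8$, are settled by the direct constructions of Lemmas \ref{lemmaA2} and \ref{lemmaA2bis}. Your final filling-and-resolvability bookkeeping would be fine if the middle step existed, but to salvage the argument you must switch to a master design whose blocks span at least three groups (so that inflated blocks contain triangles), which is in essence what the paper's choice accomplishes.
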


\begin{proof}

Let $v=4t$. The cases $t=1$ and $t=2$ correspond to a  $(2K_{4}, K_{3}+e)$-design and a  $(2K_{8}, K_{3}+e)$-design  which exist by Lemma  \ref{lemmaA2} and Lemma \ref{lemmaA2bis}.\\
For $t>2$, start from a $(K_{3}+e)$-RGDD $\cG$ of type $4^{t}$ (\cite{W}). Repeat each block of a given
resolution class of $\cG$  twice and fill each group of size 4 with a copy of a a resolvable $(2 K_{4},K_{3}+e)$-design which exists by Lemma  \ref{lemmaA2}. 
\end{proof}

\section{The case $G= K_{1,3}$}

\begin{lemma}
\label{lemmaD1} For every \ $v\equiv 4\pmod{12}$, there exists
a resolvable $(2 K_{v},K_{1,3})$-design.
\end{lemma}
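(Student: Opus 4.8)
The plan is to use a resolvable $(K_v,K_4)$-design as a scaffold and to blow up each of its $K_4$-blocks into a small resolvable $K_{1,3}$-design of index $2$. Since the necessary conditions of Lemma \ref{lemmaP2} force $\lambda\equiv 0\pmod 2$ when $v\equiv 4\pmod{12}$, it suffices to realize the index $\lambda=2$, and the residue class $v\equiv 4\pmod{12}$ is exactly the one for which a resolvable $(K_v,K_4)$-design is available.

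First I would invoke the existence, for every $v\equiv 4\pmod{12}$, of a resolvable $(K_v,K_4)$-design (recalled in the Introduction, \cite{HA,HRW}); denote its resolution classes by $R_1,\dots,R_r$ with $r=(v-1)/3$, each $R_i$ being a partition of the $v$-set into $v/4$ blocks isomorphic to $K_4$. Passing from $K_v$ to $2K_v$ simply doubles every edge, so the doubled blocks $2B$, with $B$ ranging over all blocks of the design, partition the edge set of $2K_v$. On each such block $B$ (a $K_4$ on four prescribed points) I would place a copy of the resolvable $(2K_4,K_{1,3})$-design supplied by Lemma \ref{lemmaA3}. This decomposes $2K_4$ on $V(B)$ into four $3$-stars, each of which is by itself a parallel class on $V(B)$, since a single $K_{1,3}$ already covers all four vertices of $B$. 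Label these four classes $P_{B,1},\dots,P_{B,4}$. Because every edge of $2K_v$ lies in exactly one doubled block, the totality of the stars so produced is a $K_{1,3}$-decomposition of $2K_v$.

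It then remains to resolve the design. For each pair $(i,j)$ with $1\le i\le r$ and $1\le j\le 4$, I would form $\cQ_{i,j}=\bigcup_{B\in R_i}P_{B,j}$. Since $R_i$ partitions the point set and each $P_{B,j}$ covers exactly $V(B)$, the class $\cQ_{i,j}$ meets every point in precisely one star; hence it is a parallel class, and the $4r=4(v-1)/3$ classes $\cQ_{i,j}$ together resolve the design. A quick count confirms consistency: there are $v(v-1)/12$ blocks in the $K_4$-design, each yielding four stars, for a total of $v(v-1)/3$ copies of $K_{1,3}$, which account for the $v(v-1)$ edges of $2K_v$.

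The construction is essentially frictionless: the only nontrivial external input is the resolvable $K_4$-design, and the one point genuinely worth checking is the bookkeeping that the four star-classes inside each block, grouped blockwise across a fixed resolution class $R_i$, glue together into honest parallel classes. This works precisely because a single $K_{1,3}$ spans all four vertices of its block, so no matching of stars across different blocks of $R_i$ is ever required; the alignment of the within-block resolutions with the scaffold resolution is therefore automatic.
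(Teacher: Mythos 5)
Your proposal is correct and takes essentially the same approach as the paper: both start from a resolvable $(K_v,K_4)$-design of \cite{HRW} and replace each (doubled) block with a copy of the resolvable $(2K_4,K_{1,3})$-design of Lemma \ref{lemmaA3}, with the star classes glued blockwise along each resolution class of the scaffold. Your write-up simply makes explicit the resolution bookkeeping (the classes $\cQ_{i,j}$) that the paper's one-line proof leaves implicit.
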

\begin{proof}

Start from a resolvable $(K_v, K_4)$-design   (\cite{HRW}) and replace each block of a given resolution class  with
2  copies of a resolvable $(2 K_{4},K_{1,3})$-design which exists by Lemma  \ref{lemmaA3}. This completes the proof.
\end{proof}

\begin{lemma}
\label{lemmaD2} For every \ $v\equiv 0,8\pmod{12}$ there exists
a resolvable $(6 K_{v}, K_{1,3})$-design.
\end{lemma}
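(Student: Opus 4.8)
The plan is to write $v=4t$ (forced by $v\equiv 0\pmod 4$ in Lemma \ref{lemmaP2}); the congruence $v\equiv 0,8\pmod{12}$ is exactly $t\equiv 0,2\pmod 3$, and for these $v$ Lemma \ref{lemmaP2} makes index $6$ the smallest admissible one, so it suffices to realize index exactly $6$. The engine is a weight-$4$ (Wilson-type) inflation: start from a resolvable decomposition $M$ of $K_t$ whose blocks have size $2$ or $3$, inflate every point to a group of $4$ points, replace every inflated edge by a copy of the $K_{1,3}$-RGDD of type $4^2$ and index $6$ (Lemma \ref{lemmaA4}) and every inflated triangle by a copy of the $K_{1,3}$-RGDD of type $4^3$ and index $6$ (Lemma \ref{lemmaA5}), and finally fill each group of size $4$ with a resolvable $(6K_4,K_{1,3})$-design obtained by repeating Lemma \ref{lemmaA3} three times. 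Since each pair of $M$ lies in exactly one block, the cross-group pairs receive index $6$ from the replacement and the within-group pairs receive index $6$ from the filling, so the outcome is a resolvable $(6K_{4t},K_{1,3})$-design as soon as the parallel classes can be aligned.

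The alignment forces the resolution of $M$ to be \emph{uniform} — each parallel class of $M$ must consist of blocks of a single size — because the ingredient of type $4^2$ carries $16$ parallel classes while that of type $4^3$ carries $32$, so edges and triangles cannot be merged inside one class. For $t$ even I would take $M$ to be a $1$-factorization of $K_t$ (classes of size-$2$ blocks), and for $t\equiv 3\pmod 6$ a Kirkman triple system $KTS(t)$ (classes of size-$3$ blocks); both exist for the stated $t$. These two families already reproduce Lemmas \ref{lemmaA6}, \ref{lemmaA61}, \ref{lemmaA7} and \ref{lemmaA8} (the cases $t=2,3,6,9$) and dispose of every $v\equiv 0,8,12\pmod{24}$, i.e.\ of $t\equiv 0,2,3\pmod 6$.

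The one residue left open, and the real difficulty, is $t\equiv 5\pmod 6$ (equivalently $v\equiv 20\pmod{24}$): here $t$ is odd and not divisible by $3$, so $K_t$ admits no uniform parallel class whatsoever and the inflation above is unavailable in principle. The base case $t=5$ ($v=20$) is already settled directly in Lemma \ref{lemma20}, and I would treat $t\ge 11$ by the hole-filling (frame) method suggested by the definitions of frames and incomplete resolvable designs: build a $K_{1,3}$-frame of a suitable type $g^u$ and index $6$ together with one adjoined hole $W$ of size $w\in\{8,12,20\}$, so that the $4g$ partial classes missing each group combine with the full classes of a $K_{1,3}$-IRD$(g+w,w,6)$ on that group together with $W$, while the partial classes of those IRDs combine with a resolvable $(6K_w,K_{1,3})$-design on $W$; a class count then produces exactly $4(gu+w-1)$ full classes, as required for order $gu+w=4t$. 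The main obstacle is thus not the gluing, whose bookkeeping is routine, but manufacturing the index-$6$ $K_{1,3}$-frames and the $K_{1,3}$-IRD$(g+w,w,6)$ needed for the $v\equiv 20\pmod{24}$ family; I would obtain these by weighting smaller resolvable $\{2,3\}$-frames by $4$ and filling, using Lemmas \ref{lemma20}, \ref{lemmaA61} and \ref{lemmaA6} as the hole fillers, and I expect a finite set of genuinely small orders in this residue class to still demand ad hoc direct constructions.
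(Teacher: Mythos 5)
Your construction for three of the four residues is correct and is essentially the paper's own method in a cleaner, unified form: weighting a $1$-factorization of $K_t$ (for $t$ even) or a $KTS(t)$ (for $t\equiv 3\pmod 6$) by $4$ and using Lemmas \ref{lemmaA4}, \ref{lemmaA5}, \ref{lemmaA3} is exactly the paper's Lemmas \ref{lemmaA7} and \ref{lemmaA8} pushed to all of $v\equiv 0,8,12\pmod{24}$ (the paper instead invokes $4$-RGDDs of types $12^t$ and $8^{1+3t}$ for these residues, but the class counts $16(t-1)+12=16t-4=4(v-1)$ check out either way, and your observation that the $16$-versus-$32$ class counts force uniform parallel classes is sound). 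You also correctly isolate $v\equiv 20\pmod{24}$ as the genuinely hard residue.

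The gap is precisely there. For $v\equiv 20\pmod{24}$, $v\geq 44$, what you offer is a plan, not a proof: the gluing scheme (frame partial classes $+$ IRD full classes, IRD partial classes $+$ a design on the hole) is the standard one and its bookkeeping is indeed fine, but the load-bearing objects --- index-$6$ $K_{1,3}$-frames of a suitable type and $K_{1,3}$-IRD$(g+w,w,6)$'s with $w\in\{8,12,20\}$ --- are nowhere constructed, and you concede that ``a finite set of genuinely small orders'' would still need ad hoc work. None of those IRDs exists in the paper, so your argument cannot be closed from the cited lemmas. The paper's Case 3 shows how to avoid needing any new IRD at all: take a $2$-frame of type $2^{2+3s}$ (which exists for all $s\geq 1$), give every point weight $4$ (so each frame edge becomes a copy of the index-$6$ RGDD of type $4^2$ of Lemma \ref{lemmaA4}, turning the $2$ partial classes missing a group into $32$), and adjoin a hole $H$ of size $4$ --- not $8$, $12$ or $20$. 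Then $H\cup(G_i\times Z_4)$ has exactly $12$ points split into three natural $4$-sets, so the already-available index-$6$ RGDD of type $4^3$ (Lemma \ref{lemmaA5}), with its $32$ classes, plays the role your IRD was supposed to play, while the within-$4$-set pairs are absorbed into the $12$ globally aligned filling classes; the count $12+32(2+3s)=4(v-1)$ closes the case with no leftover small orders. Choosing the hole size $4$ so that the ``incomplete'' ingredient degenerates into an object you already have is the one idea your proposal is missing.
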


\begin{proof}
The cases $v=8,12,20,24,36$ are covered by Lemmas \ref{lemmaA6}, \ref{lemmaA61}, \ref{lemma20}, \ref{lemmaA7}, \ref{lemmaA8}.  We distingish the following cases.

\begin{itemize}
\item[Case $1$]: $v\equiv 0\pmod{12}$, $v\geq 48$.

Let $v=12t$.  Start from a $4$-RGDD $ \cD$ of type $12^{t}$  (\cite{CD}) and replace each block of a given resolution class of $\cD$ with 3 copies of a resolvable $(2 K_{4},K_{1,3})$-design which exists by Lemma  \ref{lemmaA3}. Finally, fill each group of size 12 with a copy  of a  resolvable $(6 K_{12},K_{1,3})$-design which exists by Lemma  \ref{lemmaA61}. 
\item[Case $2$]: $v\equiv 8\pmod{24}$, $v\geq 32$.

Let $v=8+24t$. Start from a $4$-RGDD $ \cD$ of type $8^{1+3t}$  (\cite{CD}) and replace each block of a given resolution class of $\cD$ with 3 copies of a resolvable $(2 K_{4},K_{1,3})$-design which exists by Lemma  \ref{lemmaA3}. Finally, fill each group of size 12 with a copy of a  resolvable $(6 K_{8},K_{1,3})$-design which exists by Lemma  \ref{lemmaA6}. 

\item[Case $3$:]  $v\equiv 20\pmod{24}$, $v\geq 44$.

Let $v=20+24t$.  Start from a $2$-frame  $\cF$ of type $2^{2+3t}$,$t\geq 1$,  with groups $G_i$, $i=1,2,\ldots,2+3t$, (\cite{CD}); let $X=\cup_{i=1}^{2+3t} G_i$. Expand each point of $X$ $4$ times and add a set $H=\{\infty_1, \infty_2, \infty_3, \infty_4\}$. For each $x\in X$, place on $\{x\}\times Z_4$ 3 copies of a resolvable $(2 K_{4},K_{1,3})$-design which exists by Lemma  \ref{lemmaA3}. This gives a set $P$ of 12 partial parallel classes on $X \times Z_4$. Fill the hole $H$ with 3  copies of a resolvable $(2 K_{4},K_{1,3})$-design which exists by Lemma  \ref{lemmaA3}. This gives a set $P_H$ of 12 parallel classes on $H$. Combine $P$ and $P_H$  to obtain 12 full parallel classes of 3-stars. For $j=1,2$,  let $p_{i,j}$ be the 2 partial parallel classes which miss the group $G_i$ and for each $b\in p_{i,j}$, place on $b\times Z_4$ a copy    of a resolvable $K_{1,3}$-RGDD of type $4^{2}$ and index $6$, which exists by Lemma  \ref{lemmaA4}. This gives a set $P_{i,j}$ of $16$  partial classes of 3-stars on $X\setminus G_i$. For each $i=1,2,\ldots,2+3t$ place on $H\cup (G_i\times Z_4)$ a copy  of a resolvable $K_{1,3}$-RGDD of type $4^{3}$ and index $6$, which exists by Lemma \ref{lemmaA5}; this gives a set $P_i$ of $32$  classes of 3-stars. Finally, combine the $32$  classes of $P_i$ with the classes of $P_{i,1}$ and $P_{i,2}$ to obtain the desired result. 
\end{itemize} \end{proof}

\section{The case $G=K_4-e$}

\begin{lemma}
\label{lemmaE1} For every \ $v\equiv 0,4,8,12,16\pmod{24}$, there exists
a resolvable $(5 K_{v}, $ $K_{4}-e)$-design.
\end{lemma}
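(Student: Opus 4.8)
The plan is to isolate a single inflation step and then feed it three different master structures. The key tool is the resolvable $(5K_4,K_4-e)$-design of Lemma~\ref{lemmaA10}, which consists of six parallel classes, each a single copy of $K_4-e$ on the four points. Suppose a master design (or a resolvable GDD) has a block $B$ that is a $K_4$ on four distinct points; in the graph being decomposed at index $5$, the six edges of $B$ occur with multiplicity $5$ and hence form a copy of $5K_4$ on $B$. Replacing $B$ by a copy of the gadget therefore simultaneously inflates the index to $5$ and converts the $K_4$ into copies of $K_4-e$. Because the blocks of a resolution class of the master partition the point set, the six sub-classes created on the individual blocks can be aligned index-by-index into six full parallel classes per master class, so the inflation preserves resolvability. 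First I would record this inflation step once, exactly as the index-$6$ analogue is used for $K_{1,3}$ in the proof of Lemma~\ref{lemmaD2}.

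With this in hand I would split the admissible orders into the three families whose union is $v\equiv 0,4,8,12,16\pmod{24}$. For $v\equiv 4\pmod{12}$ (that is, $v\equiv 4,16\pmod{24}$) I would start from a resolvable $(K_v,K_4)$-design, which exists for all such $v$ by \cite{HRW}; its $K_4$-blocks partition $E(K_v)$, so the associated copies of $5K_4$ partition $E(5K_v)$, and inflating every block yields a resolvable $(5K_v,K_4-e)$-design with no groups to fill. For $v\equiv 0\pmod{12}$ (that is, $v\equiv 0,12\pmod{24}$), writing $v=12t$, the orders $v=12,24,36$ are already settled by Lemmas~\ref{lemmaA12}, \ref{lemmaA13}, \ref{lemmaA14}; for $t\geq 4$ I would take a $4$-RGDD of type $12^t$ (\cite{CD}), inflate its blocks to obtain a resolvable $(K_4-e)$-RGDD of type $12^t$ and index $5$, and then fill each group of size $12$ with a resolvable $(5K_{12},K_4-e)$-design (Lemma~\ref{lemmaA12}). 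The remaining family $v\equiv 8\pmod{24}$ is identical in form: with $v=8(1+3t)$ the order $v=8$ is Lemma~\ref{lemmaA11}, and for $t\geq 1$ I would inflate the blocks of a $4$-RGDD of type $8^{1+3t}$ (\cite{CD}) and fill each group of size $8$ with a resolvable $(5K_8,K_4-e)$-design (Lemma~\ref{lemmaA11}).

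The main obstacle I anticipate is the verification of the master ingredients rather than the inflation itself: one must confirm that $4$-RGDDs of type $12^t$ for $t\geq 4$ and of type $8^{1+3t}$ for $t\geq 1$ genuinely exist for every required parameter, i.e.\ that the divisibility conditions together with the bound $u\geq 4$ are sufficient and that no sporadic non-existent type intrudes; these facts I would quote from \cite{CD}, as they are the same ingredients underlying the $K_{1,3}$ analysis. A secondary, purely bookkeeping point is to check in the filling step that every group of a fixed size receives the same number of fill-classes, so that these can be merged across groups into genuine parallel classes of the whole design; this is routine once the resolvable designs of Lemmas~\ref{lemmaA11}--\ref{lemmaA14} are available.
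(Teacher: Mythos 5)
Your proposal is correct and follows essentially the same route as the paper's proof: the identical three-case split ($v\equiv 4\pmod{12}$ handled by inflating a resolvable $(K_v,K_4)$-design from \cite{HRW}; $v\equiv 0\pmod{12}$ by a $4$-RGDD of type $12^t$ filled via Lemma~\ref{lemmaA12}; $v\equiv 8\pmod{24}$ by a $4$-RGDD of type $8^{1+3t}$ filled via Lemma~\ref{lemmaA11}), with the same gadget of Lemma~\ref{lemmaA10} doing the block replacement and the same small orders settled by Lemmas~\ref{lemmaA10}--\ref{lemmaA14}. The only difference is that you make explicit the index-by-index alignment argument showing the inflation preserves resolvability, which the paper leaves implicit.
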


\begin{proof}
The cases $v=4,8,12,24,36$ are covered by Lemmas \ref{lemmaA10}, \ref{lemmaA11}, \ref{lemmaA12}, \ref{lemmaA13}, \ref{lemmaA14} and \ref{lemmaA15}.  We distinguish the following cases.

\begin{itemize}
\item
[Case $1$]: $v\equiv 4 \pmod{12}$.

Start from a resolvable $(K_v, K_4)$-design (\cite{HRW}) and replace each block of every resolution class  with
a copy of a resolvable $(5K_{4},K_{4}-e)$-design which exists by Lemma  \ref{lemmaA10}. 

\item
[Case $2$]: $v\equiv 0\pmod{12}$, $v\geq 48$.

Let $v=12t$. Start from a $4$-RGDD $ \cD$ of type $12^{t}$  (\cite{CD}). Replace each block of every resolution class of $\cD$ with
a copy of a resolvable $(5K_{4},K_{4}-e)$-design which exists by Lemma  \ref{lemmaA10} and fill each group of size 12 with
a copy of a  resolvable $(5 K_{12},K_{4}-e)$-design which exists by Lemma  \ref{lemmaA12}. 
  
\item
[Case $3$]: $v\equiv 8\pmod{24}$, $v\geq 32$.

Let $v=8+24t$.  Start from a $4$-RGDD $ \cD$ of type $8^{1+3t}$  (\cite{CD}). Replace each block of every resolution class of $\cD$ with
 a copy of a resolvable $(5K_{4},K_{4}-e)$-design which exists by Lemma  \ref{lemmaA10} and fill each group of size 8 with a  copy of a  resolvable $(5 K_{8},K_{4}-e)$-design which exists by Lemma  \ref{lemmaA11}. This completes the proof.
\end{itemize} \end{proof}

\begin{lemma}
\label{lemmaE3} For every \ $v\equiv 20\pmod{120}$, there exists
a resolvable $(5K_{v}, K_{4}-e)$-design.
\end{lemma}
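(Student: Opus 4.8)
\section*{Proof proposal for Lemma~\ref{lemmaE3}}

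The plan is to treat $v\equiv 20\pmod{120}$ by writing $v=20(6t+1)$, $t\ge 0$, and building the design from a resolvable group divisible design whose groups have size $20$, filled by the ingredient of Lemma~\ref{lemmaA15}. The base case $t=0$ is exactly $v=20$, which is handled by Lemma~\ref{lemmaA15}, so it remains to deal with $t\ge 1$. First I would start from a $4$-RGDD $\cD$ of type $20^{6t+1}$ (its existence is discussed below). Each block of $\cD$ is a single $K_4$, and each resolution class of $\cD$ partitions all $v=20(6t+1)$ points into such blocks.

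Next I would replace every block of every resolution class of $\cD$ by a copy of a resolvable $(5K_4,K_4-e)$-design, which exists by Lemma~\ref{lemmaA10}. Since $\cD$ has index $1$, each cross-group pair lies in exactly one block, so after replacement every cross-group pair is covered exactly $5$ times; within-group pairs are not yet covered. Because the resolvable $(5K_4,K_4-e)$-design on $4$ points has $6$ parallel classes (each a single $K_4-e$), matching up the $k$-th class across all blocks of a fixed resolution class of $\cD$ yields, for each master class, $6$ full parallel classes of $K_4-e$ on the whole point set. Finally I would fill each group of size $20$ with a copy of a resolvable $(5K_{20},K_4-e)$-design (Lemma~\ref{lemmaA15}); combining one class from each of the $6t+1$ groups gives full parallel classes that cover precisely the within-group pairs, again $5$ times each. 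The union of the two families is then a resolvable $(5K_v,K_4-e)$-design. A short count is reassuring: $\cD$ has $\tfrac{20\cdot 6t}{3}=40t$ resolution classes, giving $240t$ cross-group classes, the fillings contribute $38$ classes, and $240t+38=2(v-1)$ is exactly the number of parallel classes required for a resolvable $(5K_v,K_4-e)$-design.

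The main point requiring care is the existence of the master $4$-RGDD of type $20^{6t+1}$. I would verify the necessary conditions: $gu=20(6t+1)\equiv 0\pmod 4$, $g(u-1)=120t\equiv 0\pmod 3$, and $u=6t+1\ge 7\ge 4$ for all $t\ge 1$; note that $u\equiv 1\pmod 3$ is forced (and satisfied) because $20\equiv 2\pmod 3$. These conditions are known to be sufficient for block size $4$ and group size $g=20$, which lies outside the short list of sporadic exceptions, so such a $4$-RGDD exists (see the relevant tables in \cite{CD}). This is the step where the argument leans on external input rather than the constructions developed earlier in the paper, so it is the one I would document most carefully; should a direct $4$-RGDD of type $20^{6t+1}$ be unavailable for some small $t$, the fallback would be to realize the group size $20$ indirectly, e.g.\ by weighting a $4$-RGDD with a smaller group size and inflating its blocks, or by patching with the incomplete resolvable design of Lemma~\ref{lemma28-8}, before filling with Lemma~\ref{lemmaA15} as above.
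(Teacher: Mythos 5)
Your proposal is correct, and it reaches the result by a genuinely different route than the paper. The paper treats $t=0$ by Lemma \ref{lemmaA15} just as you do, but for larger $v$ it starts from a $4$-RGDD of type $4^{1+6t}$ (\cite{CD}), expands each point $5$ times, and replaces each inflated block by a resolvable $(K_4-e)$-RGDD of type $5^4$ and index $5$, an external ingredient quoted from Lemma 3.2 of \cite{WS}; the groups of size $20$ are then filled with copies of the design of Lemma \ref{lemmaA15}, exactly as in your construction. You instead push the group size $20$ into the master design itself (a $4$-RGDD of type $20^{6t+1}$) and use the internal gadget of Lemma \ref{lemmaA10} on each block; your checks of pair coverage (cross-group pairs five times from the blocks, within-group pairs from the fillings), of resolvability ($6$ full classes per master class, $38$ from the fillings), and the count $240t+38=2(v-1)$ are all accurate, so the construction stands. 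The trade-off is where the external citation sits: the paper's master RGDD of type $4^{u}$, $u\equiv 1\pmod{3}$, is a lighter, classical citation, but it must import the type-$5^4$ RGDD from \cite{WS}; your version avoids \cite{WS} altogether, mirroring the paper's own Lemma \ref{lemmaE1}, at the cost of citing $4$-RGDDs with group size $20$, which is exactly the step you flag as needing care. That step is indeed safe, and your fallback is the right justification for it: inflating a $4$-RGDD of type $4^{6t+1}$ by weight $5$ and replacing each block by a resolvable TD$(4,5)$ (equivalently, four MOLS of order $5$) produces the type $20^{6t+1}$ master you need --- so in effect your route and the paper's differ only in whether the weight-$5$ inflation is performed with an RTD or with the Wang--Su $(K_4-e)$-RGDD. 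A minor point in your favour: you cover every $t\ge 1$, while the paper's proof reads ``for $t>1$'' and so formally omits $v=140$ (evidently a typo for $t\ge 1$, since the same construction works there).
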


\begin{proof}
Let $v=20+120t$. The case $t=0$ corresponds to a  $(5K_{20}, K_{4}-e)$-design which exists by Lemma  \ref{lemmaA15}. For $t>1$ take  a $4$-RGDD $ \cG$ of type $4^{1+6t}$  (\cite{CD}).  Expand each point  $5$ times and replace each block of the resolution classes of $\cG$ with
 a copy of a resolvable $(K_{4}-e)$-RGDD of tipe $5^{4}$ and index 5  which exists by Lemma 3.2 of \cite{WS}. Finally, fill each group of size 20 with a  copy of a resolvable $(5 K_{20},K_{4}-e)$-design which exists by Lemma  \ref{lemmaA15}. 
\end{proof}

\begin{lemma}
\label{lemmaE4} For every \ $v\equiv 44\pmod{120}$, $v>44$, there exists
a resolvable $(5K_{v}, K_{4}-e)$-design.
\end{lemma}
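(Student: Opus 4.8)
The plan is to treat $v=44+120t$ with $t\ge 1$ (the case $v=44$ being the base case of Lemma \ref{lemma44}) by the same frame-and-fill strategy used for the residue $20\pmod{120}$ in Lemma \ref{lemmaE3}, but now adjoining an extra hole that absorbs the residue $44\equiv 4\pmod{20}$. The starting observation is the decomposition $v=20(1+6t)+24$. First I would take a $(K_4-e)$-frame of type $20^{1+6t}$ and index $5$: for $t\ge 1$ the number of groups $u=1+6t\ge 7\ge 4$ is admissible, and such a frame is available from the literature (\cite{WS,CD}), obtained in the spirit of Lemma \ref{lemmaE3} by weighting a suitable master structure and replacing its blocks by resolvable $(K_4-e)$-RGDDs of type $5^4$ and index $5$. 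For each group $G_i$ this frame supplies exactly $\frac{5\cdot 20\cdot 4}{2\cdot 5}=40$ partial parallel classes missing $G_i$.

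Next I would adjoin a set $H$ of $24$ new points and fill the configuration. On each $G_i\cup H$ (with $|G_i|=20$, $|H|=24$) I would place a copy of an incomplete resolvable $(K_4-e)$-design of order $44$ with a hole of size $24$ and index $5$, i.e.\ the analogue of Lemma \ref{lemma28-8} for this residue class; such an IRD has exactly $40$ full classes covering $G_i\cup H$ and $2(24-1)=46$ partial classes covering $G_i$ and missing $H$. Finally I would fill the hole $H$ with a resolvable $(5K_{24},K_4-e)$-design, which exists by Lemma \ref{lemmaE1} (since $24\equiv 0\pmod{24}$) and has $2(24-1)=46$ parallel classes.

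The classes then match up exactly. For each $i$ I pair the $40$ frame classes missing $G_i$ with the $40$ full classes of the IRD on $G_i\cup H$, producing $40(1+6t)$ full parallel classes of the final design; and for each $k=1,\dots,46$ I take the $k$-th partial class of the IRD on $G_i\cup H$ for every $i$ simultaneously, together with the $k$-th class of the $(5K_{24})$-design on $H$, producing $46$ further full classes. This yields $40(1+6t)+46=86+240t=2(v-1)$ parallel classes, the correct total. The index-$5$ coverage also splits cleanly: cross-group edges are covered by the frame, the within-group and group-to-$H$ edges by the IRDs, and the within-$H$ edges by the $(5K_{24})$-design, with no double counting since each IRD has $H$ as a hole.

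The main obstacle is producing the two ingredients, and in particular the incomplete resolvable design of order $44$ with a hole of size $24$. Because $v\equiv 4\pmod{20}$, the hole is forced to have size $\equiv 4\pmod{20}$, so the hole of size $8$ furnished by Lemma \ref{lemma28-8} (which drives the companion residue $68\pmod{120}$) is of no use here, and a dedicated incomplete design must be built instead. I expect this to be done by a direct difference/base-block construction over $Z_{20}$ adjoined to the $24$ hole points, exactly in the style of Lemma \ref{lemma28-8}, or by decomposing the $20$ non-hole points through a further small frame. Once this incomplete design and the frame of type $20^{1+6t}$ are established, the assembly described above is routine and completes the proof.
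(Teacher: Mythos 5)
There is a fatal gap: the key ingredient you defer to the end --- an incomplete resolvable $(K_4-e)$-design of order $44$ with a hole of size $24$ and index $5$ --- does not exist, so no difference construction ``in the style of Lemma \ref{lemma28-8}'' can ever produce it. The obstruction is structural: in $K_{44}\setminus K_{24}$ the hole points are pairwise non-adjacent, and $K_4-e$ has exactly one pair of non-adjacent vertices, so every block of a decomposition contains at most $2$ hole points. A full parallel class on $44$ points consists of exactly $11$ blocks and hence can cover at most $22$ of the $24$ hole points, a contradiction; thus the design you need (which must have $\frac{5\cdot 20\cdot 4}{10}=40$ full classes) cannot exist. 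This is precisely why the paper's incomplete design in Lemma \ref{lemma28-8} has hole size $8$ against $20$ non-hole points: one always needs $h\le v$, and your split $v=20(1+6t)+24$ violates it. (A secondary problem: the paper's cited source \cite{WS} supplies $(K_4-e)$-frames of type $20^{3+6t}$ only, not $20^{1+6t}$, so even your frame ingredient is unsupported. Your class-counting and the matching of partial/full classes are otherwise correct, but they rest on ingredients that are respectively nonexistent and unestablished.)

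The paper's actual proof avoids frames and incomplete designs entirely: for $v=44+120t$, $t>0$, it takes a $(K_4-e)$-RGDD of type $4^{1+5(2+6t)}$ and index $5$ (which exists by \cite{WS}; note $4(11+30t)=44+120t$) and simply fills each group of size $4$ with a copy of the resolvable $(5K_4,K_4-e)$-design of Lemma \ref{lemmaA10}, aligning the six parallel classes of the filling designs across the groups. If you want to salvage a frame-and-fill argument, you would need a split in which the hole is at most as large as each group and small enough for a direct construction, e.g.\ type $20^{2+6t}$ plus a hole of size $4$ with an IRD$(24,4,5)$; but that again requires frames of a type not provided by the paper's references, whereas the RGDD route is immediate.
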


\begin{proof}
Let $v=44+120t$. The case $t=0$ corresponds to a  $(5K_{44}, K_{4}-e)$-design which exists by Lemma  \ref{lemma44}. For $t>0$ take  a $(K_4-e)$-RGDD $ \cG$ of type $4^{1+5(2+6t)}$ and index 5 (\cite{WS}) and fill each group of size 4 with a  copy of a resolvable $(5 K_{4},K_{4}-e)$-design which exists by Lemma  \ref{lemmaA10}. 
\end{proof}

\begin{lemma}
\label{lemmaE6} For every \ $v\equiv 68\pmod{120}$, there exists
a resolvable $(5K_{v}, K_{4}-e)$-design.
\end{lemma}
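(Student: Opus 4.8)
The plan is to dispose of the ground case and then to build the general design by surrounding a resolvable frame, whose groups have size $20$, with a common hole of size $8$; the incomplete design of Lemma \ref{lemma28-8} is tailored precisely for this filling. First I would settle $t=0$: the design of order $68$ exists by Lemma \ref{lemma68}. So assume $v=68+120t$ with $t\geq 1$ and write $v=20s+8$, where $s=3+6t$; thus $s\equiv 3\pmod 6$ (in particular $s\equiv 0\pmod 3$) and $s\geq 9$. The skeleton of the construction is a resolvable $(K_4-e)$-frame $\cF$ of type $20^{s}$ and index $5$, with groups $G_1,\dots ,G_s$. By the frame definition such an $\cF$ has exactly $\frac{5\cdot 20\cdot 4}{2\cdot 5}=40$ partial parallel classes missing each fixed group, each of them partitioning $\bigcup_{j\neq i}G_j$ into copies of $K_4-e$.

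Granting $\cF$, I would adjoin a set $H$ of $8$ new points and, for each $i$, place on $G_i\cup H$ a copy of the incomplete resolvable $(K_4-e)$-design of order $28$, index $5$ and hole $H$ provided by Lemma \ref{lemma28-8}. Each such copy covers all edges inside $G_i$ and all edges joining $G_i$ to $H$, and it contributes $14$ partial classes (covering $G_i$, missing $H$) together with $40$ full classes (covering $G_i\cup H$). The edges between distinct groups are already covered by $\cF$, so the only edges still missing are those inside $H$; I would cover these with a resolvable $(5K_8,K_4-e)$-design (Lemma \ref{lemmaA11}), which supplies $14$ parallel classes on $H$.

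The assembling into full parallel classes of $5K_v$ then works because the relevant counts coincide. For each $i$ there are $40$ partial classes of $\cF$ missing $G_i$ and $40$ full classes of the $i$-th incomplete design; pairing them (a class missing $G_i$ with a class covering $G_i\cup H$) yields $40s$ full parallel classes. Moreover, for $k=1,\dots ,14$, the union of the $k$-th partial class of each of the $s$ incomplete designs (these together cover $\bigcup_i G_i$) with the $k$-th class of the $(5K_8,K_4-e)$-design (covering $H$) is again a full parallel class, giving $14$ more. The total $40s+14=2(20s+7)=2(v-1)$ matches the number of classes forced by a resolvable $(5K_v,K_4-e)$-design, so nothing is left uncovered or unresolved.

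The genuine difficulty is the very first ingredient, the frame $\cF$ of type $20^{s}$ with $s\equiv 0\pmod 3$. The elementary ``weight and fill'' recipe of Lemmas \ref{lemmaE1}, \ref{lemmaE3} and \ref{lemmaE4} is not available here: a $K_4$-RGDD (or $K_4$-frame) of type $20^{s}$ forces $s\equiv 1\pmod 3$, while the Kirkman-frame route would require a $(K_4-e)$-RGDD of type $w^{3}$ with $20=aw$, $a$ even and $4\mid 3w$, which admits no solution. I would therefore draw $\cF$ directly from the specialized theory of resolvable $(K_4-e)$-frames (\cite{WS}); establishing that such frames of type $20^{s}$ and index $5$ exist for every admissible $s$ is the step I expect to demand the most care, and is what makes this case heavier than the preceding ones.
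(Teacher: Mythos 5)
Your proposal is correct and is essentially the paper's own proof: ground case $t=0$ from Lemma \ref{lemma68}, then a $(K_4-e)$-frame of type $20^{3+6t}$ from \cite{WS} (the paper takes the index-$1$ frame's partial classes five times, which is exactly your index-$5$ frame), the incomplete design of Lemma \ref{lemma28-8} placed on each $G_i\cup H$, and Lemma \ref{lemmaA11} filling $H$, assembled with the same class counts $40(3+6t)+14=2(v-1)$. The frame existence you flag as the delicate step is handled in the paper by the same citation \cite{WS} you propose to use.
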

\begin{proof}
Let $v$=$68+120t$. The case $t=0$ corresponds to a  $(5K_{68}, K_{4}-e)$-design which exists by Lemma  \ref{lemma68}. For $t>0$,
let $\cF$ be a $(K_4-e)$-frame of type $20^{3+6t}$ (\cite{WS}) with groups $G_i$, $i=1,2,\ldots,3+6t$.  Add a set $H=\{\infty_1,\infty_2,\ldots,\infty_8\}$.
For each  $i=1,2,\ldots,3+6t$,  let $P_{i}$ the set of the  partial parallel classes which miss the group $G_i$, taken 5 times.  Place on $G_i\cup H$ a copy of an incomplete resolvable $(K_4-e)$-design of order 28 and index 5, having the set of 8 infinite poits as hole, and  combine its full classes with the partial classes of $P_i$ so to obtain $40(3+6t)$ parallel classes on $H\cup(\cup_{i=1}^{3+6t} G_i)$. Fill the hole $H$ with a copy  of  a resolvable $(5K_{8}, K_{4}-e)$-design which exists by Lemma  \ref{lemmaA11} and combine its 14 classes with the partial classes of $P_i$  so to obtain 14 parallel classes. The result is a resolvable $(5K_{v}, K_{4}-e)$-design.
\end{proof}

\begin{lemma}
\label{lemmaE5} For every \ $v\equiv 92\pmod{120}$, there exists
a resolvable $(5K_{v}, K_{4}-e)$-design.
\end{lemma}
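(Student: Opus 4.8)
The plan is to reproduce the frame construction of Lemma~\ref{lemmaE6} with the hole enlarged from size $8$ to size $12$. Writing $v=92+120t=20(4+6t)+12$, I would begin from a $(K_4-e)$-frame $\cF$ of type $20^{4+6t}$, with groups $G_1,\dots,G_{4+6t}$, and adjoin a set $H$ of $12$ new points to serve as a hole. The choice of hole size is forced by arithmetic: since $92\equiv 12\pmod{20}$, a hole of size $12$ is the natural analogue of the size-$8$ hole that worked for $68\equiv 8\pmod{20}$ in Lemma~\ref{lemmaE6}.

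First I would handle the between-group and group-to-hole edges. For each $i$, place on $G_i\cup H$ a copy of an incomplete resolvable $(K_4-e)$-design of order $32$ and index $5$ with $H$ as its hole (an IRD$(32,12,5)$); such a design has $22$ partial classes covering $G_i$ and $40$ full classes covering $G_i\cup H$. Taking the $8$ partial classes of $\cF$ that miss $G_i$ five times gives $40$ partial classes covering $\bigcup_{j\neq i}G_j$, and combining these with the $40$ full classes of the IRD on $G_i\cup H$ yields $40$ parallel classes on $H\cup\bigcup_j G_j$; over all groups this produces $40(4+6t)$ parallel classes. Next I would fill the hole with a resolvable $(5K_{12},K_4-e)$-design (Lemma~\ref{lemmaA12}), giving $22$ classes on $H$, and combine each of them with one of the $22$ partial IRD classes drawn from each group $G_i$, producing $22$ further parallel classes. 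The total is $40(4+6t)+22=182+240t$, which equals $2(v-1)$, the exact number of classes required for a resolvable $(5K_v,K_4-e)$-design.

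The hard part will be supplying the two ingredients not yet in place. One is the $(K_4-e)$-frame of type $20^{4+6t}$: here the number of groups satisfies $4+6t\equiv 4\pmod 6$, whereas Lemma~\ref{lemmaE6} used the type $20^{3+6t}$ with $3+6t\equiv 3\pmod 6$, so I would need the corresponding frame-existence result (and, as in Lemma~\ref{lemmaE6}, the base value $t=0$, i.e.\ $v=92$, may have to be treated separately, either by checking that the frame $20^4$ exists or by a direct construction). The genuine obstacle, however, is the IRD$(32,12,5)$. I would build it by a difference construction over $Z_{20}$ with $12$ adjoined infinite points, exactly in the spirit of Lemma~\ref{lemma28-8}: fill the five cosets $5Z_{20}+i$, $i=0,\dots,4$, with resolvable $(5K_4,K_4-e)$-designs (Lemma~\ref{lemmaA10}) to obtain $6$ of the partial classes, and then exhibit base blocks over $Z_{20}$ giving the remaining $16$ partial classes together with all $40$ full classes. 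Finding base blocks whose differences decompose $5(K_{32}\setminus K_{12})$ with the correct resolvability is the one step that requires real search; once the IRD$(32,12,5)$ is secured, the assembly above is routine and structurally identical to that of Lemma~\ref{lemmaE6}.
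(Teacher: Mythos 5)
Your assembly is arithmetically sound: an IRD$(32,12,5)$ would indeed have $22$ partial and $40$ full classes, a $(K_4-e)$-frame of type $20^{4+6t}$ has $8$ partial classes per group, and $40(4+6t)+22=182+240t=2(v-1)$ is exactly the required number of classes, so the combination scheme transfers correctly from Lemma \ref{lemmaE6}. But as a proof the proposal has two genuine gaps, both of which you name and neither of which you fill. First, the frame of type $20^{4+6t}$: the paper invokes \cite{WS} only for types $20^{3+6t}$ with at least nine groups, and you give no argument or reference for $u\equiv 4\pmod 6$; note that $u=4$ is the absolute minimum number of groups for which a $(K_4-e)$-frame can exist (a block needs at least three groups, so a partial class missing one group forces $u-1\geq 3$), and such extremal small objects are precisely the ones whose existence cannot be waved through. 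Second, and more seriously, the IRD$(32,12,5)$ is not constructed: saying that suitable base blocks over $Z_{20}$ with $12$ infinite points must be found by search is a statement of intent, not a proof, and nothing guarantees a priori that a difference construction of the shape you describe exists. Since the base case $t=0$ (that is, $v=92$) needs both unestablished ingredients simultaneously, the entire congruence class remains unproven; your proposal reduces Lemma \ref{lemmaE5} to two open existence questions rather than settling it.

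For contrast, the paper avoids both obstacles by a completely different and self-contained method. It takes the edges of differences $1$, $\frac{v}{2}-1$ and $\frac{v}{2}$ in $Z_v$, splits them by Stern--Lenz \cite{SL} into five $1$-factors $F_1,\dots,F_5$, and for each $F_j$ places on $K_v$ a $(K_4-e)$-RGDD of type $2^{v/2}$ from \cite{WS} whose groups are the edges of $F_j$. This covers every edge outside the three difference classes five times and every special edge four times; the special edges are then explicitly decomposed into the $\frac{v}{2}$ blocks $(i,\frac{v}{2}+i,\frac{v}{2}+1+i;1+i)$, which split (even $i$ versus odd $i$) into the two remaining parallel classes, giving $2(v-2)+2=2(v-1)$ classes in all. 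That route needs only known RGDDs of type $2^u$ plus one explicit calculation, works uniformly for every $t\geq 0$ including $v=92$, and requires no new frame and no new incomplete resolvable design.
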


\begin{proof}
Let $v=92+120t$. Let $P_1=\{\{i,1+i\}, i \in Z_v\}$ and $P_{\frac{v}{2}-1}=\{\{i,\frac{v}{2}-1+i\}, i \in Z_v\}$ be two sets of $v$ edges of $K_v$ and
$P_{\frac{v}{2}}=\{\{i,\frac{v}{2}+i\}, i=0,1,\ldots,\frac{v}{2}-1\}$ be a set of $\frac{v}{2}$ edges of $K_v$. The edges of $P_1\cup P_{\frac{v}{2}-1}\cup P_{\frac{v}{2}}$ can be decomposed into a set of 5 1-factors $F_j, j=1,2,3,4,5$ (\cite{SL}). For each $j=1,2,3,4,5$
take on $K_v$  a $(K_4-e)$-RGDD $ \cG$ of type $2^{6+10(4+6t}$  (\cite{WS}) with the edges of $F_j$ as groups.  Finally, the missing edges of $P_1\cup P_{\frac{v}{2}-1}\cup P_{\frac{v}{2}}$ (until now arranged only 4 times) can be decomposed into the set $\{(i,\frac{v}{2}+i, \frac{v}{2}+1+i; 1+i)$, $i =0,1,\ldots, \frac{v}{2}-1$\} of $\frac{v}{2}$ copies of $K_4-e$,  which generate the remaining two parallel classes (the first class is obtained for $i =0,2,\ldots, \frac{v}{2}-2$, while the second class for $i =1,3,\ldots, \frac{v}{2}-1$).
\end{proof}

\begin{lemma}
\label{lemmaE2} For every \ $v\equiv 116\pmod{120}$, there exists
a resolvable $(K_{v}, K_{4}-e)$-design.
\end{lemma}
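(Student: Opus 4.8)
The plan is to mimic the frame construction used for the index-$5$ case $v\equiv 68\pmod{120}$ (Lemma~\ref{lemmaE6}), but to choose the hole so that every ingredient can be taken at index $1$. Write $v=116+120t=20(5+6t)+16$. The idea is to split off a hole $H$ of size $16$, fill each group-together-with-hole by an incomplete resolvable design of order $36$ whose hole is $H$, and finally fill $H$ itself by a resolvable $(K_{16},K_4-e)$-design. The point of using a hole of size $16$ (rather than $8$, as in Lemma~\ref{lemmaE6}) is that $16\equiv 16\pmod{20}$, so all the pieces to be inserted already exist at index $1$; this is precisely why the present construction produces index $1$ while the analogous one in Lemma~\ref{lemmaE6} is forced up to index $5$.

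First I would take a $(K_4-e)$-frame $\cF$ of type $20^{5+6t}$ and index $1$ (from \cite{WS}), with groups $G_1,\dots,G_{5+6t}$ and $X=\bigcup_i G_i$; each group is missed by exactly $\frac{2\cdot 20}{5}=8$ partial parallel classes. I would then adjoin a set $H$ of $16$ new points. For each $i$, I place on $G_i\cup H$ (a set of $36$ points) an incomplete resolvable $(K_4-e)$-design of order $36$ with hole $H$ and index $1$; by the defining counts of an incomplete resolvable design this supplies $\frac{20\cdot 4}{10}=8$ full classes covering all of $G_i\cup H$ and $\frac{15\cdot 4}{10}=6$ partial classes covering $G_i$ but missing $H$.

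Next I would assemble the parallel classes in two batches. Each of the $8$ full classes on $G_i\cup H$ is vertex-disjoint from a partial frame class missing $G_i$ (which covers $X\setminus G_i$), and their union is a full parallel class on the whole point set $X\cup H$; pairing them off group by group yields $8(5+6t)$ full classes. Then I fill $H$ with a resolvable $(K_{16},K_4-e)$-design (which exists since $16\equiv 16\pmod{20}$, \cite{GL}), giving $6$ classes on $H$; combining each of these with one hole-missing partial class from every incomplete design (one per group, together covering all of $X$) produces $6$ further full classes. The grand total is $8(5+6t)+6=46+48t=\frac{2(v-1)}{5}$, exactly the number required, and a quick edge count confirms index $1$: within-group edges are covered by the incomplete designs, between-group edges by $\cF$, group–hole edges again by the incomplete designs, and within-hole edges by the $(K_{16},K_4-e)$-design.

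The main obstacle is securing the two non-trivial ingredients at index $1$: the $(K_4-e)$-frame of type $20^{5+6t}$ and, above all, the incomplete resolvable $(K_4-e)$-design of order $36$ with a hole of size $16$ and index $1$. The frame should follow from the frame constructions of \cite{WS} already invoked for Lemmas~\ref{lemmaE4}--\ref{lemmaE6}, while the incomplete design is the real crux; I would either quote it from \cite{WS} (or \cite{GL}) or build it directly, presumably by developing suitable base blocks over $Z_{20}$ with the $16$ hole points as infinities, in the spirit of Lemma~\ref{lemma28-8}. Finally, should the frame of type $20^{5}$ or the incomplete design fail to be available at the smallest value $t=0$ (i.e.\ $v=116$), I would handle $v=116$ as a separate base case by an ad hoc direct construction and run the frame argument only for $t\geq 1$.
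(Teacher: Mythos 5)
Your construction never gets off the ground because its two critical ingredients are left as placeholders rather than established facts. The assembly itself is sound -- the counts check out ($8$ frame partial classes per group pairing with the $8$ full classes of each incomplete design, the $6$ classes on $H$ pairing with one hole-missing partial class per group, for a total of $8(5+6t)+6=46+48t=\frac{2(v-1)}{5}$ classes, and each edge covered exactly once). But the incomplete resolvable $(K_4-e)$-design of order $36$ with a hole of size $16$ at index $1$, which you yourself identify as ``the real crux,'' is neither constructed nor supported by a citation you have verified; the same goes for the index-$1$ $(K_4-e)$-frame of type $20^{5+6t}$, and your fallback for $v=116$ is an unspecified ``ad hoc direct construction'' of a resolvable design on $116$ points, which is itself a substantial task. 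A proof whose essential ingredients are all deferred in this way is not a proof; any one of them failing to exist would sink the whole argument.

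The deeper miss is that none of this machinery is needed. The lemma concerns index $\lambda=1$, and the congruence $v\equiv 116\pmod{120}$ implies $v\equiv 16\pmod{20}$, which is exactly the (necessary and) sufficient condition in the known spectrum theorem for resolvable $(K_v,K_4-e)$-designs of index $1$, due to Ge--Ling \cite{GL} and completed by Wang \cite{W1} -- a result the paper even quotes in its introduction. The paper's entire proof is this one-line observation plus the citation. Before building a recursive construction, you should check whether the statement is a sub-case of a result already in the literature; here it is, and recognizing that replaces several unproven lemmas with a single reference.
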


\begin{proof}
Since the condition $v\equiv 116\pmod{120}$ implies $v\equiv 16\pmod{20}$, the proof follows by \cite{GL,W1}.
\end{proof}

\section{Conclusion}

We are now in a position to prove the following main result.

\begin{thm}
The necessary conditions of Lemma \ref{lemmaP} are sufficient for the existence of a resolvable $(\lambda K_{v},G)$-design in the cases when $G= C_4, K_3+e, K_{1,3}, K_4-e$.
\end{thm}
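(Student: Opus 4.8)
The plan is to read off, for each of the four graphs $G$ and each admissible residue class of $v$, the exact minimal index $\lambda_0=\lambda_0(G,v)$ forced by the necessary conditions (Lemmas \ref{lemmaP}, \ref{lemmaP1}, \ref{lemmaP2}, \ref{lemmaP3}), to exhibit a resolvable $(\lambda_0 K_v,G)$-design by invoking the small cases and constructions of the previous sections, and then to obtain every admissible $\lambda$ by superposition. The elementary observation driving everything is that in each case the necessary conditions take the shape ``$v$ lies in a prescribed residue class and $\lambda\equiv 0\pmod{\lambda_0}$''; hence it suffices to realise the single minimal index $\lambda_0$, since $m$ disjoint copies of a resolvable $(\lambda_0 K_v,G)$-design (taking the union of their parallel classes as the new resolution) form a resolvable $(m\lambda_0 K_v,G)$-design, exactly the device used in Lemma \ref{lemmaA14}. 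Thus the whole argument reduces to verifying that the constructive lemmas cover every residue class of $v$ at the minimal index.

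For $G=C_4$ and $G=K_3+e$ the necessary conditions are $v\equiv 0\pmod 4$ and $\lambda$ even, so $\lambda_0=2$; Lemmas \ref{lemmaC1} and \ref{lemmaC2} furnish resolvable $(2K_v,G)$-designs for every $v\equiv 0\pmod 4$, and taking $\lambda/2$ copies closes these cases. For $G=K_{1,3}$ I would split according to $v\bmod 12$: when $v\equiv 4\pmod{12}$ the minimal index is $2$ (Lemma \ref{lemmaD1}), and when $v\equiv 0,8\pmod{12}$ it is $6$ (Lemma \ref{lemmaD2}). Since every $v\equiv 0\pmod 4$ is congruent to $0$, $4$ or $8$ modulo $12$, these two lemmas exhaust all admissible orders, and copying again yields all admissible $\lambda$.

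The delicate case is $G=K_4-e$, where the minimal index depends on $v\bmod 20$ (Lemma \ref{lemmaP3}): it equals $5$ when $v\equiv 0,4,8,12\pmod{20}$ and equals $1$ when $v\equiv 16\pmod{20}$. For the first family I would confirm that Lemmas \ref{lemmaE1}, \ref{lemmaE3}, \ref{lemmaE4}, \ref{lemmaE5}, \ref{lemmaE6} jointly produce resolvable $(5K_v,K_4-e)$-designs for all such $v$: Lemma \ref{lemmaE1} handles every $v\equiv 0\pmod 4$ except $v\equiv 20\pmod{24}$, and the omitted residues $v\equiv 20,44,68,92\pmod{120}$ (which are precisely those among $v\equiv 20\pmod{24}$ lying in $\{0,4,8,12\}\bmod 20$) are supplied by Lemmas \ref{lemmaE3}, \ref{lemmaE4}, \ref{lemmaE6}, \ref{lemmaE5} respectively; then $\lambda/5$ copies give every admissible $\lambda$. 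For $v\equiv 16\pmod{20}$ any $\lambda\geq 1$ is admissible, so I need an index-$1$ design for every such $v$: these exist by the resolvable $(K_4-e)$-design spectrum cited in the introduction (\cite{CSZ,GL,W1}), with the residue $v\equiv 116\pmod{120}$ recorded separately in Lemma \ref{lemmaE2}, whence $\lambda$ copies realise arbitrary $\lambda$.

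The main obstacle I anticipate is precisely this $K_4-e$ bookkeeping modulo $120$: one must check that the union of the mod-$24$ construction (Lemma \ref{lemmaE1}) and the four mod-$120$ constructions leaves no class with $v\equiv 0,4,8,12\pmod{20}$ uncovered at index $5$, and that the index-$1$ spectrum for $v\equiv 16\pmod{20}$ is genuinely complete. Once this coverage is secured, the superposition principle makes the passage from the minimal index to every admissible $\lambda$ entirely routine, and the four graphs together settle all connected subgraphs of $K_4$.
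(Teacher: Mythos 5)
Your proposal is correct and follows essentially the same route as the paper: realise the minimal index $\lambda_0$ in each residue class via Lemmas \ref{lemmaC1}--\ref{lemmaC2} ($C_4$, $K_3+e$), \ref{lemmaD1}--\ref{lemmaD2} ($K_{1,3}$), \ref{lemmaE1}--\ref{lemmaE6} together with the index-$1$ spectrum of \cite{W1} ($K_4-e$), and then superpose copies to reach every admissible $\lambda$. Your mod-$24$/mod-$120$ coverage check for $K_4-e$ is exactly the case split implicit in the paper's lemmas (and your residue list $v\equiv 0,4,8,12\pmod{20}$ even corrects a typo in the paper's own proof, which misprints it as $0,1,8,12$).
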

\begin{proof}
Let $G\in \{C_{4}, K_{3}+e\}$. For 
 any  \ $v \equiv 0\pmod {4}$ and $\lambda =2\mu$ ($\mu \geq 1$), combine $\mu$ copies of a  resolvable $(2 K_{v},G)$-design, which exists by Lemmas  \ref{lemmaC1} and \ref{lemmaC2}.
 
 Let $G=K_{1,3}$. For 
 any  \ $v \equiv 4\pmod {12}$ and $\lambda =2\mu$ ($\mu \geq 1$), combine $\mu$ copies of a  resolvable $(2 K_{v},K_{1,3})$-design, which exists by Lemma  \ref{lemmaD1}; while,   
 for any  \ $v \equiv 0,8\pmod {12}$ and $\lambda =6\mu$ ($\mu \geq 1$), combine $\mu$ copies of a  resolvable $(6 K_{v},K_{1,3})$-design, which exists by Lemma  \ref{lemmaD2}.
 
 Let $G=K_4-e$. For 
 any  \ $v \equiv 0,1,8,12 \pmod {20}$ and $\lambda =5\mu$ ($\mu \geq 1$), combine $\mu$ copies of a  resolvable $(5 K_{v},K_4-e)$-design, which exists by Lemmas  \ref{lemmaE1} -\ref{lemmaE6}; while,   
 for any  \ $v \equiv 16\pmod {20}$ combine $\lambda$ copies of a  resolvable $(K_{v},K_4-e)$-design, which exists by Theorem 3 of  \cite{W1}.
\end{proof}

\end{document}